\newtheorem{thm}{Theorem}[section]
\newtheorem{lma}[thm]{Lemma}
\newtheorem{ass}[thm]{Assumption}
\newtheorem{cor}[thm]{Corollary}
\newtheorem{prop}[thm]{Proposition}
\newtheorem{rem}[thm]{Remark}
\newcommand{\R}{\mathbb{R}}
\newcommand{\N}{\mathbb{N}}
\newcommand{\C}{\mathbb{C}}
\providecommand{\norm}[1]{\lVert#1\rVert}
\renewcommand{\geq}{\geqslant}
\renewcommand{\leq}{\leqslant}
\renewcommand{\epsilon}{\varepsilon}
\renewcommand{\P}{\mathbb{P}}
\renewcommand{\i}{\mathbf{i}}
\newcommand{\hd}{\dim_\mathrm{H}}
\newcommand{\bd}{\dim_\mathrm{B}}
\renewcommand{\geq}{\geqslant}
\renewcommand{\leq}{\leqslant}
\providecommand{\A}{\mathcal{A}}
\renewcommand{\i}{\mathbf{i}}
\providecommand{\I}{\mathcal{I}}
\renewcommand{\u}{\mathbf{u}}
\renewcommand{\l}{\mathcal{L}}
\providecommand{\x}{\mathbf{x}}
\renewcommand{\t}{\mathbf{t}}
\providecommand{\i}{\mathbf{i}}
\providecommand{\M}{\mathcal{M}}
\providecommand{\norm}[1]{\lVert#1\rVert}
\providecommand{\bignorm}[1]{\left\lVert#1\right\rVert}
\providecommand{\one}{\mathbf{1}}
\providecommand{\myfloor}[1]{\left \lfloor #1 \right \rfloor }
\renewcommand{\c}{\mathcal{C}}
\newcommand{\interior}[1]{%
  {\kern0pt#1}^{\mathrm{o}}%
}
\begin{document}

\title[]{Analyticity of the affinity dimension \\ for planar iterated function systems with matrices which preserve a cone}

\author{Natalia Jurga and Ian D. Morris}
\address{Natalia Jurga: Department of Mathematics, University of Surrey, Guildford, GU2 7XH, UK}
\email{N.Jurga@surrey.ac.uk}
\address{Ian Morris: Department of Mathematics, University of Surrey, Guildford, GU2 7XH, UK}
\email{I.Morris@surrey.ac.uk }

\date{\today}

\subjclass[2010]{}

\begin{abstract}
The sub-additive pressure function $P(s)$ for an affine iterated function system (IFS) and the affinity dimension, defined as the unique solution $s_0$ to $P(s_0)=1$, were introduced by K. Falconer in his seminal 1988 paper on self-affine fractals. The affinity dimension prescribes a value for the Hausdorff dimension of a self-affine set which is known to be correct in generic cases and in an increasing range of explicit cases. It was shown by Feng and Shmerkin in 2014 that the affinity dimension depends continuously on the IFS. In this article we prove that when the linear parts of the affinities which define the IFS are $2 \times 2$ matrices which strictly preserve a common cone, the sub-additive pressure is locally real analytic as a function of the matrix coefficients of the linear parts of the affinities. In this setting we also show that the sub-additive pressure is piecewise real analytic in $s$, implying that the affinity dimension is locally analytic in the matrix coefficients.  Combining this with a recent result of B\'ar\'any, Hochman and Rapaport we obtain results concerning the analyticity of the Hausdorff dimension for certain families of planar self-affine sets.

\end{abstract}

\keywords{}
\maketitle

\section{Introduction}\label{intro}

Let $\Phi=\{S^{(i)}: \R^d \to \R^d: i \in \I\}$ be a finite collection of contracting affine maps, that is, $S^{(i)}(\cdot)= A^{(i)} (\cdot)+b^{(i)}$ where $A^{(i)}$ is an invertible contracting $d \times d$ matrix and $b^{(i)} \in \R^d$ is a translation vector. It is well-known that there exists a unique non-empty compact set $F \subseteq \R^d$ satisfying:
\[
F \ = \ \bigcup_{i \in \mathcal{I}} S^{(i)}(F)
\]
which is known as the attractor of the iterated function system (IFS) $\{S^{(i)}\}_{i \in \mathcal{I}}$ and is called a \emph{self-affine set}. In the special case that all of the maps are similarities  we say that $F$ is a \emph{self-similar set}. A large part of the dimension theory of self-similar sets is well understood. For example, if we denote the contraction ratios of the similarities $S^{(i)}$ by $\mathbf{r}:=\{r_i\}_{i \in \I}$ then under suitable separation assumptions on the pieces $\{S^{(i)}(F)\}_{i \in \I}$ it is well known that all notions of dimension of $F$ coincide and the common value is given by the solution $s$ to the pressure-type formula
\begin{eqnarray}
P_{\mathbf{r}}(s)=\sum_{i \in \I} r_i^s=1
\label{hm}
\end{eqnarray}
which we call the \emph{similarity dimension}. However, when we pass to the more general self-affine setting where the matrices $A^{(i)}$ are allowed to exhibit different rates of contraction in different directions, the problem of calculating the dimension becomes drastically more complex.

For any invertible  $d \times d$ matrix $A$ write $\alpha_1(A) \geq \alpha_2(A) \geq \cdots \geq \alpha_d(A) >0$ for the singular values of $A$. Following \cite{falconer}, let 
$$\phi^s(A):= \alpha_1(A)\alpha_2(A) \ldots \alpha_{\myfloor{s}}(A)\alpha_{\myfloor{s}+1}(A)^{s-\myfloor{s}}$$
when $0 \leq s \leq d$, and $\phi^s(A):=|\det A|^{s/d}$ when $s\geq d$. 
We call $\phi^s(A)$ the \emph{singular value function}. Let $\A=\{A^{(i)}: i \in \I\}$ denote the set of linear parts of the affine maps $S^{(i)}$, so that $\A$ is a set of contracting invertible $d \times d$ matrices, and denote $\A^n=\{A^{(i_1)} \cdots A^{(i_n)}: i_j \in \I\}$. The singular value function is submultiplicative in the sense that $\phi^s(AB) \leq \phi^s(A)\phi^s(B)$ for any $A, B \in \bigcup_{n \in \N} \A^n$. Therefore, the sub-additive pressure can be defined as
$$P_{\A}(s)=\lim_{n\to\infty}\left(\sum_{A \in \A^n}\phi^s(A )\right)^{\frac{1}{n}}.$$
Falconer \cite{falconer} introduced the \emph{affinity dimension} of $F$ which is given by the unique value $s_0>0$ such that $P_{\A}(s_0)=1$. Since $s_0$ only depends on the set of matrices $\A$ (and not on the translation vectors) we will denote the affinity dimension of $F$ by $\dim \A$. Falconer showed that the Hausdorff dimension of a self-affine set is `typically' given by the affinity dimension, and moreover that the affinity dimension is always an upper bound for the Hausdorff dimension of a self-affine set. Falconer's proof did not construct \emph{explicit} examples of self-affine sets with affinity dimension equal to the Hausdorff dimension, but such examples have been constructed in a range of subsequent articles such as \cite{hl}, \cite{ks},\cite{ms}, all within the planar setting. The most general result of this kind to date is due to B\'ar\'any, Hochman and Rapaport \cite{bhr} which we briefly describe below. We let $\hd$ and $\bd$ respectively denote the Hausdorff dimension and box dimension of a subset of $\mathbb{R}^d$.

\begin{thm}[Theorem 1.1 \cite{bhr}]
Let $\Phi=\{S^{(i)}: \R^2\to\R^2 : i \in \I\}$ be an affine iterated function system and $F= \bigcup_{i \in \I} S^{(i)}(F)$ be the associated self-affine set. Suppose that
\begin{enumerate}
\item $\Phi$ satisfies the \emph{strong open set condition}: there exists a bounded open set $U$ with $U \cap F \neq \emptyset$, $S^{(i)}( U) \subset U$ for all $i \in \I$ and the images $S^{(i)} (U)$ are pairwise disjoint and
\item  the group generated by the set of normalised matrices $\tilde{\A}=\{\frac{1}{\sqrt{|\det A^{(i)}|}} A^{(i)}: i \in \I\}$ is strongly irreducible and is not contained in a compact subgroup of  $\mathcal{GL}_2(\R)$ (where by strongly irreducible we mean that the matrices do not preserve a finite union of lines through the origin in $\R^2$).
\end{enumerate}
Then $\hd F=\bd F= \dim \A$. \label{bhr}
\end{thm}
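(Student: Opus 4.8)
The plan is to deduce Theorem~\ref{bhr} from a variational principle together with a Ledrappier--Young type dimension formula for planar self-affine \emph{measures}. The first step is to recall (following K\"aenm\"aki, and Morris--K\"aenm\"aki) that the affinity dimension is realised as a supremum of \emph{Lyapunov dimensions} of shift-invariant measures: writing $\chi_1(\mu)\ge\chi_2(\mu)>0$ for the Lyapunov exponents of the matrix cocycle with respect to a shift-invariant probability measure $\mu$ on $\I^\N$ and $h(\mu)$ for the Kolmogorov--Sinai entropy, the Lyapunov dimension $\dim_L\mu$ is the value $s$ at which $\phi^s$ decays at the same exponential rate as the measure of cylinders; one has $\sup_\mu \dim_L\mu = \dim\A$, and the supremum is attained by a K\"aenm\"aki equilibrium state $\mu_0$. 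On the other side, Falconer's covering argument gives $\hd F\le\bd F\le\dim\A$ unconditionally. Hence it suffices to exhibit a single self-affine measure $\nu$ on $F$ with $\hd\nu=\dim\A$; I would take $\nu$ to be the pushforward of $\mu_0$ under the coding map, using the strong open set condition (hypothesis~(1)) to guarantee that this map is injective off a $\mu_0$-null set, so that the entropy, Lyapunov exponents and Lyapunov dimension of $\nu$ agree with those of $\mu_0$.

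The second step is to invoke the Ledrappier--Young formula for planar self-affine measures (B\'ar\'any, and B\'ar\'any--K\"aenm\"aki, building on Feng--Hu): there is a Furstenberg stationary measure $\nu_F$ on the projective line associated to the normalised matrices $\tilde\A$, and for $\nu_F$-almost every direction $\xi$ the orthogonal projection $\pi_\xi$ onto the line through the origin in direction $\xi$ satisfies, schematically,
\[
\hd\nu \;=\; \min\!\Bigl\{\,2,\ D + \tfrac{\max\{0,\, h(\mu_0)-\chi_1(\mu_0)\, D\}}{\chi_2(\mu_0)}\,\Bigr\},
\qquad D := \hd\bigl(\pi_\xi\nu\bigr),
\]
together with the always-true bound $D\le\min\{1,\,h(\mu_0)/\chi_1(\mu_0)\}$. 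A direct check shows that if $D$ attains this upper bound then the right-hand side equals $\min\{2,\dim_L\mu_0\}=\dim_L\mu_0=\dim\A$. Thus the whole theorem reduces to the \emph{no dimension drop} statement: under hypothesis~(2), $\hd(\pi_\xi\nu)=\min\{1,h(\mu_0)/\chi_1(\mu_0)\}$ for $\nu_F$-almost every $\xi$.

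The heart of the matter, and the step I expect to be overwhelmingly the hardest, is this no-dimension-drop statement, which I would prove via Hochman's inverse theorem for the growth of entropy under convolution on $\R$. The scheme is: (i) using strong irreducibility and non-compactness of the group generated by $\tilde\A$ (hence proximality of the cocycle), show that $\nu_F$ is non-atomic and of positive dimension (Guivarc'h--Le~Page, Hochman--Solomyak); (ii) linearise the IFS at small scales, so that a long composition $S^{(i_1)}\circ\cdots\circ S^{(i_n)}$ restricted to a small ball is, up to a small multiplicative error, the linear map $A^{(i_1)}\cdots A^{(i_n)}$, whose polar decomposition transports the projection direction exactly according to the projective random walk governed by $\nu_F$; (iii) argue by contradiction --- if $\pi_\xi\nu$ suffered a dimension drop on a set of $\xi$ of positive $\nu_F$-measure, then, because the random walk moves $\xi$ over a set of positive dimension, the self-similarity relation $\nu=\sum_{i\in\I}p_i\,S^{(i)}\nu$ would force $\pi_\xi\nu$ to be, at infinitely many scales, a nontrivial convolution of affinely rescaled copies of measures of the same ``entropy defect'', and Hochman's inverse theorem would then produce a definite gain of entropy at every small scale, contradicting the finiteness of $\hd(\pi_\xi\nu)$. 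Both hypotheses in Theorem~\ref{bhr} are essential here: strong irreducibility prevents the relevant directions from collapsing to a finite set (which would allow the projections to be self-similar and drop dimension), while non-compactness supplies the genuine separation of scales on which the linearisation argument relies.

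Finally I would assemble the pieces. Applying the no-dimension-drop conclusion to $\nu$, the Ledrappier--Young formula yields $\hd\nu=\min\{2,\dim_L\mu_0\}=\dim\A$, so that $\hd F\ge\hd\nu=\dim\A$; combined with the unconditional bounds $\hd F\le\bd F\le\dim\A$ this forces $\hd F=\bd F=\dim\A$, which is the assertion of Theorem~\ref{bhr}.
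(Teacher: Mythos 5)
The statement you have been asked to prove is \emph{not} proved in this paper: it is Theorem~1.1 of B\'ar\'any, Hochman and Rapaport \cite{bhr}, quoted here verbatim as an external input and used as a black box to deduce Corollaries~\ref{bhr cor} and~\ref{cor2}. There is therefore no ``paper's own proof'' against which your argument can be compared, and the correct reading of the task is that you should have recognised this as a cited theorem rather than an internal claim requiring proof.

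That said, your sketch is a broadly accurate high-level description of the actual strategy of \cite{bhr}: reduce the set result to a measure result via K\"aenm\"aki's variational principle and a K\"aenm\"aki equilibrium state; express $\hd\nu$ through the Ledrappier--Young/B\'ar\'any--K\"aenm\"aki formula in terms of the Furstenberg measure and a projection dimension $D$; and prove that $D$ attains the upper bound $\min\{1,h/\chi_1\}$ by a Hochman-style entropy-growth/inverse-theorem argument, with strong irreducibility and non-compactness ensuring proximality and a non-atomic, positive-dimensional Furstenberg measure. A few cautionary remarks, since as a proof this remains only a scheme. (i) The claim that the strong open set condition makes the coding map injective off a $\mu_0$-null set requires justification; in \cite{bhr} this is handled through the exact dimensionality of the self-affine measure and an analysis of overlaps, not by a one-line appeal to SOSC. (ii) Your version of the Ledrappier--Young formula is schematic, and the reduction of ``$\hd\nu=\dim_L\mu_0$'' to the single assertion ``no dimension drop for projections'' suppresses the need to control the conditional (slice) measures as well; B\'ar\'any--K\"aenm\"aki's formula handles this, but it is part of what must be verified, not assumed. (iii) Step~(iii) of your ``heart of the matter'' --- linearisation, the transport of directions by the random walk, the convolution structure, and the application of the inverse theorem --- is precisely where the bulk of \cite{bhr} lives and where all the genuine difficulty is; stating it in three sentences does not constitute a proof. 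In short, the outline is faithful to the literature, but it reproduces the skeleton of a long and technically demanding argument without supplying any of the load-bearing details; for the purposes of the present paper it would have been correct simply to cite \cite{bhr}.
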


In this paper we will be concerned with the regularity of the dependence of the affinity dimension on the underlying maps. We return briefly to the definition of the similarity dimension for comparison. It is clear from (\ref{hm}) that the pressure $P_{\mathbf{r}}(s)$ depends continuously on the maps in the IFS, and indeed is analytic in $s$ and in each contraction ratio $r_i$ on $(0,1)$.  Via a suitable version of the implicit function theorem one may deduce that the similarity dimension of a self-similar set depends analytically on the contraction ratios. In this paper we would like to consider the analogous properties of the affinity dimension. 

According to the survey of Shmerkin \cite{shmerkin}, the question of the continuity of $\A \to \dim \A$  was a folklore open problem within the fractal geometry community since around 2000. The question was first raised explicitly in the papers of Falconer and Sloan \cite{fs} and Kaenmaki and Shmerkin \cite{ks}, where $P_{\A}(s)$ was shown to depend continuously on $\A$ in some special cases. However it was not until 2014 that $\A \mapsto \dim \A$ was shown to be continuous in general by Feng and Shmerkin \cite{fsh}. An alternative proof was subsequently given by the second named author \cite{morris}. It is a natural question to ask whether we can say anything stronger about the regularity of the map $\A \to \dim \A$. In this paper we will explore this question in the two dimensional setting. 

As an analogue of the analyticity of $P_{\mathbf{r}}(s)$ in the contraction ratios, in this paper we show that  $P_{\A}(s)$ is locally analytic in the matrix coefficients whenever the matrices strictly preserve a common cone and do not all preserve the same line going through the origin. For the purposes of this article we shall say that a closed convex subset $\mathcal{C} \subset \R^2 \setminus \{0\}$ is a \emph{cone} if for all $x \in \mathcal{C}$ and $\lambda>0$ we have $\lambda x \in \mathcal{C}$, and if $\textrm{int}  (\mathcal{C}) \neq \emptyset$. We say that the set of matrices $\mathcal{A}$ \emph{strictly preserves} the cone $\mathcal{C}$ if $A (\mathcal{C}\setminus\{0\}) \subset \textrm{int}(\mathcal{C})$ for all $A \in \mathcal{A}$. It is easy to see (in the two-dimensional context) that this is equivalent to the existence of a common basis with respect to which all of the matrices in $\A$ have positive entries. We also show that if $\A$ strictly preserves a common cone, $P_{\A}(s)$ is piecewise analytic in $s$, a property which was previously investigated in the context of triangular matrices by Fraser \cite{fraser}. Consequently we are able to show that the affinity dimension is locally analytic in the matrix coefficients.

Without loss of generality we can assume that $\I=\{1, \ldots, |\I|\}$ where $|\I|$ denotes the size of the alphabet $\I$. Given $\t=(t_1, \ldots, t_{4|\I|}) \in  \C^{4|\I|}$ and $k \in \I$ we let $A^{(k)}_{\t}$ denote the matrix
\begin{eqnarray}A^{(k)}_{\t}= \begin{pmatrix} t_{4k-3} & t_{4k-2} \\ t_{4k-1} &t_{4k} \end{pmatrix} \label{perturb} \end{eqnarray}
and $\A_{\t}=\{A^{(k)}_{\t}: k \in \I\}$. 

  Let $\t \in (-1,1)^{4|\I|}$ such that $\A_{\t}$ is a set of contracting invertible $2 \times 2$ matrices. Then we can define the sub-additive pressure associated to the parameters $s$ and $\t$ by
$$P(s,\t):=P_{\A_{\t}}(s)= \lim_{n\to\infty}\left(\sum_{A \in \A_{\t}^n}\phi^s(A )\right)^{\frac{1}{n}}.$$

We say that $\A_{\t}$ is \emph{irreducible} if there does not exist a one-dimensional subspace of $\R^2$ which is preserved by all of the matrices $A \in \A_{\t}$. The following is our main result.

\begin{thm}[Main theorem] \label{main} Let $\t_0 \in (-1,1)^{4|\I|}$ such that $\A_{\t_0}$ is an irreducible set of invertible matrices that preserve a common cone and are contracting with respect to some norm on $\mathbb{R}^2$, and suppose that $s_0= \dim \A_{\t_0} \in (0,1)\cup(1,2)$. Then there exists an open neighbourhood $U \subset (-1,1)^{4|\I|}$ of $\t_0$ and $U' \subset \R$ of $s_0$ such that $(s,\t) \mapsto P(s, \t)$ is analytic on $U' \times U$. Moreover, $\t \mapsto \dim \A_{\t}$ is analytic on $U$. \end{thm}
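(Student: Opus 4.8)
The plan is to realize the sub-additive pressure as the leading eigenvalue of a family of transfer operators acting on a suitable cone of functions, and then to exploit analytic perturbation theory for such operators. The key observation is that when $\A_{\t}$ strictly preserves a common cone $\mathcal{C}$, each matrix $A^{(k)}_{\t}$ acts on the projective arc $J = \mathbb{P}(\mathcal{C})$ as a strict contraction in the Hilbert (or cross-ratio) metric, and moreover this action is real analytic in $\t$. One first shows that the irreducibility hypothesis together with strict cone preservation forces the set $\A_{\t_0}$ to be \emph{proximal}, so that the cocycle has a simple top Lyapunov exponent with a one-dimensional Oseledets space lying in $\interior{\mathcal{C}}$; this is exactly the regime in which the Markov operator approach applies. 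For $s$ in a neighbourhood of $s_0$, split $s = \lfloor s_0 \rfloor + \sigma$ and write $\phi^s(A) = |\det A|^{\lfloor s_0\rfloor} \cdot (\alpha_1(A)/\alpha_2(A))^{\sigma}$ or $\phi^s(A) = \alpha_1(A)^{s}$ according to whether $s_0 \in (1,2)$ or $s_0 \in (0,1)$; the point is that on each such window $\phi^s$ is, up to the analytic factor $|\det A|^{\bullet}$, a power of the norm of $A$ restricted to the top direction, and hence expressible via a Birkhoff cocycle over the projective action.

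Concretely, I would introduce the weighted transfer operator
\[
(\mathcal{L}_{s,\t} f)(\xi) \;=\; \sum_{k \in \I} \psi_{s,\t}^{(k)}(\xi)\, f\big(A^{(k)}_{\t}\cdot \xi\big), \qquad \xi \in J,
\]
where $\psi_{s,\t}^{(k)}(\xi)$ is the singular-value weight $\phi^s$ expressed pointwise on the projective line (essentially $\|A^{(k)}_{\t} v_\xi\|^{\sigma}|\det A^{(k)}_{\t}|^{\lfloor s_0\rfloor}$ for a unit vector $v_\xi$ in the direction $\xi$), acting on the space of real-analytic functions on a complex neighbourhood of $J$. Because the maps $\xi \mapsto A^{(k)}_{\t}\cdot\xi$ uniformly contract $J$ into its interior, $\mathcal{L}_{s,\t}$ is a compact (indeed nuclear) operator on this space, and by a Ruelle–Perron–Frobenius / Birkhoff cone argument it has a simple, isolated leading eigenvalue $\lambda(s,\t) > 0$ with strictly positive eigenfunction, separated by a spectral gap from the rest of the spectrum. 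A standard sub-additive argument identifies $\lambda(s,\t) = P(s,\t)$: iterating $\mathcal{L}_{s,\t}$ and pairing against the constant function produces $\sum_{A \in \A_{\t}^n}\phi^s(A)$ up to bounded multiplicative factors, so the $n$-th root converges to $\lambda(s,\t)$.

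The analyticity of $(s,\t) \mapsto P(s,\t)$ then follows from Kato's analytic perturbation theory: $(s,\t) \mapsto \mathcal{L}_{s,\t}$ is an analytic family of bounded operators on the Banach space of holomorphic functions on a fixed complex neighbourhood of $J$ (the weights $\psi_{s,\t}^{(k)}$ depend analytically on $(s,\t)$, using that $|\det A^{(k)}_{\t}|$ and $\|A^{(k)}_{\t}v_\xi\|^2$ are polynomials in $\t$ that stay bounded away from $0$ on the relevant domain, so their real powers extend analytically), and a simple isolated eigenvalue of an analytic operator family depends analytically on the parameters. This handles the first assertion of Theorem~\ref{main} on the window $U' \times U$. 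For the final assertion, one checks that $s \mapsto P(s,\t_0)$ is strictly decreasing — immediate since increasing $s$ strictly decreases each $\phi^s(A)$ because $\alpha_2(A) < \alpha_1(A)$ on the relevant window (this is where $s_0 \notin \{0,1,2\}$ and invertibility are used; at $s=1$ the two branches have a corner, which is why the theorem excludes $s_0 = 1$) — so $\partial_s P(s_0,\t_0) < 0$, and the analytic implicit function theorem applied to $P(s,\t) = 1$ yields a real-analytic solution $\t \mapsto s_0(\t) = \dim \A_{\t}$ on a possibly smaller neighbourhood of $\t_0$.

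The main obstacle I anticipate is not the perturbation theory itself but establishing the strict cone contraction and the spectral gap with enough uniformity to run Kato's theorem on a \emph{fixed} function space as $\t$ varies: one must choose a single complex neighbourhood of $J$ into which all the perturbed projective maps $\xi \mapsto A^{(k)}_{\t}\cdot\xi$ map holomorphically and strictly, for all $\t$ near $\t_0$, and verify that $\mathcal{L}_{s,\t}$ is quasi-compact with a genuine gap there. A secondary subtlety is the correct pointwise expression of $\phi^s$ in the non-proximal-looking case and showing the irreducibility plus cone hypotheses do force proximality — i.e. that the common invariant direction cannot lie in $\mathcal{C}$ unless all matrices share it, which the irreducibility assumption rules out; handling the transition between the $s_0 \in (0,1)$ and $s_0 \in (1,2)$ regimes (equivalently, the $\phi^s(A) = \alpha_1^s$ versus $\phi^s(A) = \alpha_1\alpha_2^{s-1}$ formulas) in a unified analytic way is where the bookkeeping is heaviest, and is presumably why the theorem is stated separately on each branch.
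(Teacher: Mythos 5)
Your proposal is essentially the same strategy as the paper's: realise $P(s,\t)$ as the simple leading eigenvalue of a weighted composition (transfer) operator on a Banach space of holomorphic functions on a fixed complex domain containing the invariant projective arc, prove that the operator family is analytic in $(s,\t)$, invoke analytic perturbation theory to get analyticity of $\lambda_1(s,\t)=P(s,\t)$, and finish with $\partial_s P<0$ plus the implicit function theorem. The paper implements this with the Hardy--Hilbert space $H^2(D)$ for the fixed disc $D$ of radius $1/2$ centred at $1/2$, after first conjugating to genuinely positive matrices, uses a Krein--Rutman argument on a cone of real-valued functions for simplicity of the top eigenvalue, and records Hartogs's theorem to glue the single-variable analyticity into joint analyticity; your outline is coarser on the choice of function space and on the passage from separate to joint analyticity, but the skeleton is the same.

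One concrete error to fix: your decomposition of the singular value function on the window $s_0\in(1,2)$ is wrong. Writing $s=1+\sigma$, you claim $\phi^s(A)=|\det A|\,(\alpha_1/\alpha_2)^{\sigma}$ and a corresponding pointwise weight $\|A v_\xi\|^{\sigma}|\det A|$, but $|\det A|(\alpha_1/\alpha_2)^{s-1}=\alpha_1^{\,s}\alpha_2^{\,2-s}$, which is not $\phi^s(A)=\alpha_1\alpha_2^{\,s-1}$. The correct identity (the one the paper uses) is $\phi^s(A)=\|A\|^{\,2-s}\,|\det A|^{\,s-1}$, so the pointwise weight should be $\|A v_\xi\|^{\,2-s}|\det A|^{\,s-1}$. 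With your weight the iterated cocycle sums to $\sum_A \alpha_1(A)^{\,s}\alpha_2(A)$ rather than $\sum_A \phi^s(A)$, so the leading eigenvalue would not equal the pressure. Once the exponents $a=2-s$, $b=s-1$ are corrected (matching $a+b=1$, $b=s-1$ from $\alpha_1^a(\alpha_1\alpha_2)^b=\alpha_1\alpha_2^{\,s-1}$), the rest of your argument goes through and coincides with the paper's.
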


In particular, when $0<s < 1$, we have $\phi^s(A)= \norm{A}^s$. Therefore a special case of Theorem \ref{main} is the analyticity of the matrix norm pressure 
$$P(s,\t)= \lim_{n\to\infty}\left(\sum_{A \in \A_{\t}^n}\norm{A}^s \right)^{\frac{1}{n}}$$
in $(s,\t)$.

Since we are in the planar setting and the assumptions on our set of matrices imply that the assumptions of theorem \ref{bhr} are satisfied, our main result yields some corollaries concerning the analyticity of the Hausdorff (and box) dimensions. Fix a set of two-dimensional translation vectors $\{b^{(i)}: i \in \I\}$ and define $\Phi_{\t}:=\{S_{\t}^{(i)} (\cdot)=A^{(i)}_{\t}(\cdot)+b^{(i)}: i \in \I\}$ to be the iterated function system associated to the set of matrices $\A_{\t}$ and the translation vectors $\{b^{(i)}: i \in \I\}$. Let $F_{\t}=\bigcup_{i \in \I}  S^{(i)}_{\t} (F_{\t})$ denote the attractor of $\Phi_{\t}$. We say that $\Phi_{\t}$ satisfies the \emph{strong separation condition} if the pieces $S_{\t}^{(i)} (F_{\t})$ are pairwise disjoint.

\begin{cor} \label{bhr cor}
Let $\t_0$ satisfy the assumptions of theorem \ref{main} and suppose that the translations are chosen in a way that $\Phi_{\t_0}$ satisfies the strong separation condition. Then there exists an open neighbourhood $U \subset (-1,1)^{4|\I|}$ of $\t_0$ such that $\t \mapsto \hd F_{\t}=\bd F_{\t}$ is analytic on $U$. 
\end{cor}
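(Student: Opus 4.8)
The plan is to deduce the corollary from the Main Theorem \ref{main} together with Theorem \ref{bhr}. The main theorem already gives us, on a suitable open neighbourhood $U$ of $\t_0$, that $\t \mapsto \dim \A_{\t}$ is analytic. So what remains is to verify that for every $\t$ in a (possibly smaller) neighbourhood of $\t_0$, the iterated function system $\Phi_{\t}$ satisfies the two hypotheses of Theorem \ref{bhr}, so that $\hd F_{\t} = \bd F_{\t} = \dim \A_{\t}$ holds throughout that neighbourhood and the right-hand side is analytic there.

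First I would check hypothesis (2) of Theorem \ref{bhr}: strong irreducibility and non-compactness of the group generated by the normalised matrices $\tilde{\A}_{\t}$. Since $\A_{\t_0}$ strictly preserves a common cone, the normalised matrices do too (normalisation by a positive scalar preserves the cone), and a group of matrices all of whose generators map a cone $\mathcal{C} \setminus \{0\}$ into $\mathrm{int}(\mathcal{C})$ cannot be contained in a compact subgroup of $\mathcal{GL}_2(\R)$ — a compact group preserves a norm and hence cannot uniformly contract the cone, which strict preservation forces (one sees directly that the projective action has a contracting fixed point). Strong irreducibility likewise follows: matrices strictly preserving a cone cannot permute a finite union of lines through the origin, since iterating drives everything into the interior of the cone, so no finite nonempty union of lines can be invariant; combined with the irreducibility hypothesis on $\A_{\t_0}$ this gives strong irreducibility. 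These are open conditions in $\t$: strict cone preservation $A^{(k)}_{\t}(\mathcal{C}\setminus\{0\}) \subset \mathrm{int}(\mathcal{C})$ is an open condition on the matrix entries (equivalently, positivity of entries in a fixed basis, which is visibly open), and irreducibility of $\A_{\t}$ is also open near an irreducible $\A_{\t_0}$ (non-existence of a common invariant line). Hence shrinking $U$ if necessary, hypothesis (2) holds on all of $U$.

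Next I would handle hypothesis (1). We are given that $\Phi_{\t_0}$ satisfies the strong separation condition, i.e. the compact sets $S^{(i)}_{\t_0}(F_{\t_0})$ are pairwise disjoint. The point is that the strong separation condition is stable under perturbation of $\t$. Indeed, the attractor $F_{\t}$ varies continuously (in the Hausdorff metric) with $\t$: this is the standard continuity of attractors of contracting IFS in their defining parameters, valid uniformly as long as the contraction constants stay bounded away from $1$, which holds near $\t_0$ since contractivity with respect to a fixed equivalent norm is an open condition. Disjointness of finitely many compact sets is an open condition in the Hausdorff metric, and the maps $S^{(i)}_{\t}$ depend continuously on $\t$, so $S^{(i)}_{\t}(F_{\t})$ depends continuously on $\t$; hence pairwise disjointness persists on a neighbourhood of $\t_0$. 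Strong separation implies the strong open set condition (take $U$ to be a small neighbourhood of $F_{\t}$, or indeed it is well known strong separation implies SOSC), so hypothesis (1) of Theorem \ref{bhr} holds on this neighbourhood as well.

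Intersecting the neighbourhoods obtained above with the neighbourhood $U$ from Theorem \ref{main}, we get an open set on which Theorem \ref{bhr} applies at every $\t$ and on which $\t \mapsto \dim \A_{\t}$ is analytic; therefore $\t \mapsto \hd F_{\t} = \bd F_{\t} = \dim \A_{\t}$ is analytic there. The only mildly delicate point is the uniform continuity of $F_{\t}$ in $\t$ (and hence of the pieces $S^{(i)}_{\t}(F_{\t})$), which I would establish by the usual contraction-mapping argument on the space of compact subsets of a fixed large ball, noting that all $\Phi_{\t}$ for $\t$ near $\t_0$ are uniform contractions with a common contraction ratio with respect to a common norm. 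None of the steps presents a genuine obstacle; the corollary is essentially a stability-of-hypotheses argument layered on top of the Main Theorem.
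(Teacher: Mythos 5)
Your proposal is correct and follows essentially the same route as the paper: verify that the hypotheses of Theorem \ref{bhr} are stable under small perturbations of $\t$ near $\t_0$, then invoke Theorem \ref{main}. One small imprecision worth tightening: in the strong-irreducibility step, the standalone assertion that ``no finite nonempty union of lines can be invariant'' under a set of matrices strictly preserving a cone is false (a single positive matrix preserves the union of its two eigenlines); the correct deduction, which the paper also uses, is that strict cone preservation \emph{together with} irreducibility forces strong irreducibility, since any invariant finite union must consist of eigenlines whose Perron directions all lie in the cone interior and would thus furnish a common invariant line. The paper's non-compactness argument is also somewhat more direct than yours, using Perron--Frobenius to conclude $\lambda_1(A) > \sqrt{|\det A|}$ and hence $\|\tilde{A}^n\| \to \infty$.
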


Corollary \ref{bhr cor} follows immediately from theorems \ref{bhr} and \ref{main} and its proof can be found at the end of section \ref{proof}. $\Phi_{\t_0}$ is assumed to satisfy the strong separation condition rather than the strong open set condition to ensure that $\Phi_{\t}$ also has sufficient separation for $\t$ close to $\t_0$, in order for the hypothesis of theorem \ref{bhr} to be satisfied for $\t$ close to $\t_0$. 

Alternatively we can also allow the translations to vary with $\t$. For each $i \in \I$ and $\t$ associate a translation vector $b^{(i)}_\t \in \R^2$ and define $\Phi_{\t}:=\{S_{\t}^{(i)}(\cdot)=A_{\t}(\cdot)+b^{(i)}_{\t}: i \in \I\}$ with attractor $F_{\t}$.

\begin{cor} \label{cor2}
Let $\t_0$ satisfy the assumptions of theorem \ref{main} and suppose that $\Phi_{\t}$ satisfies the strong open set condition for any $\t$ in an open neighbourhood of $\t_0$. Then there exists an open neighbourhood $U \subset (-1,1)^{4|\I|}$ of $\t_0$ such that $\t \mapsto \hd F_{\t}=\bd F_{\t}$ is analytic on $U$.  \end{cor}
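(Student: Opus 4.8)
The plan is to deduce Corollary \ref{cor2} directly from Theorem \ref{main} and Theorem \ref{bhr}. Since the affinity dimension $\dim\A_\t$ depends only on the matrices $\A_\t$ and not on the translations $\{b^{(i)}_\t\}$, Theorem \ref{main} applies without change and yields an open neighbourhood $U_0$ of $\t_0$ on which $\t\mapsto\dim\A_\t$ is analytic. It therefore suffices to produce an open neighbourhood $U\subseteq U_0$ of $\t_0$ such that, for every $\t\in U$, the iterated function system $\Phi_\t$ and its attractor $F_\t$ satisfy the hypotheses of Theorem \ref{bhr}; then Theorem \ref{bhr} gives $\hd F_\t=\bd F_\t=\dim\A_\t$ for $\t\in U$, and the right-hand side is analytic on $U$, which is the assertion.

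Hypothesis (1) of Theorem \ref{bhr}, the strong open set condition, is assumed in Corollary \ref{cor2} to hold throughout an open neighbourhood of $\t_0$; this is why we impose it on a whole neighbourhood rather than deducing it, as in Corollary \ref{bhr cor}, from strong separation. For hypothesis (2) I would first check stability of the underlying structure. Strict preservation of a fixed cone $\mathcal{C}$ is an open condition: $A^{(k)}_{\t_0}(\mathcal{C}\cap S^1)$ is a compact subset of the open set $\mathrm{int}(\mathcal{C})$, hence lies at positive distance from $\partial\mathcal{C}$, and this survives a sufficiently small perturbation of the entries of $A^{(k)}_{\t_0}$; by homogeneity one gets $A^{(k)}_\t(\mathcal{C}\setminus\{0\})\subset\mathrm{int}(\mathcal{C})$ for all $\t$ near $\t_0$. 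Irreducibility is open as well: the set of pairs $(\t,\ell)\in(-1,1)^{4|\I|}\times\mathbb{P}^1(\R)$ for which $\ell$ is a common invariant line of $\A_\t$ is closed and $\mathbb{P}^1(\R)$ is compact, so its projection to parameter space is closed. Finally, contractivity with respect to a fixed norm, and invertibility, are plainly stable. After shrinking we may therefore assume that on our neighbourhood $\A_\t$ is an irreducible set of contracting invertible $2\times2$ matrices strictly preserving the common cone $\mathcal{C}$.

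It remains to verify that for such $\t$ the normalised family $\tilde\A_\t=\{|\det A|^{-1/2}A : A\in\A_\t\}$ is strongly irreducible and generates a subgroup of $\mathcal{GL}_2(\R)$ that is not relatively compact; this cone-rigidity step is the only non-routine point, and it is exactly the one already needed to invoke Theorem \ref{bhr} in Corollary \ref{bhr cor}. For non-compactness, observe that a matrix with equal singular values is a scalar times an orthogonal matrix, hence acts on $\mathbb{P}^1(\R)$ as an isometry and cannot carry the proper closed arc $\bar I$ determined by $\mathcal{C}$ strictly into its interior; so every $A\in\A_\t$ has $\alpha_1(A)>\alpha_2(A)$, whence $\alpha_1(\tilde A)>1>\alpha_2(\tilde A)$, and strict cone preservation forces $\alpha_1(\tilde A^n)\to\infty$. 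For strong irreducibility, suppose $\A_\t$ preserved a finite union $E$ of lines through the origin, with $|E|\ge2$ by irreducibility. On $\mathbb{P}^1(\R)$ each $A\in\A_\t$ sends $\bar I$ into $\mathrm{int}(I)$, so $A$ has a unique attracting fixed point $p_A\in\mathrm{int}(I)$ (its dominant eigendirection) and, applying the same to $A^{-1}$ on the complementary closed arc, a unique repelling fixed point $q_A$. Since $A$ permutes the finite set $E$, since the forward $A$-orbit of any point of $E$ in $\bar I$ converges to $p_A$ and the backward orbit of any point of $E$ outside $\bar I$ converges to $q_A$ ($E$ meets both regions, else $|E|\le1$), and since $A^n(\ell)\to p_A$ for any $\ell\in E\setminus\{p_A,q_A\}$, injectivity of $A$ on $E$ forces $E\subseteq\{p_A,q_A\}$. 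Hence $|E|\le2$; but $|E|=2$ makes all of $\A_\t$ diagonal in the common basis spanned by $E$, and $|E|=1$ is a common invariant line, so in either case $\A_\t$ is reducible, a contradiction. Therefore $\tilde\A_\t$ is strongly irreducible, Theorem \ref{bhr} applies on the resulting neighbourhood $U$, and combined with Theorem \ref{main} this proves the corollary; the main obstacle is precisely this last cone-rigidity argument, the rest being routine openness and perturbation checks.
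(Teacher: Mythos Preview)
Your argument is correct and follows precisely the paper's route: show that the hypotheses of Theorem~\ref{bhr} (the strong open set condition by assumption; strong irreducibility and non-compactness of the normalised group from strict cone preservation plus irreducibility) persist on a neighbourhood of $\t_0$, and then apply Theorem~\ref{main}. The paper is much terser --- it simply asserts that cone preservation plus irreducibility gives strong irreducibility, and obtains non-compactness in one line from the Perron--Frobenius inequality $\lambda_1(A)>\sqrt{|\det A|}$ --- whereas you supply the full dynamical argument; the only step worth tightening is the clause ``strict cone preservation forces $\alpha_1(\tilde A^n)\to\infty$'', which does not follow from $\alpha_1(\tilde A)>1$ alone (singular values are not multiplicative) but is immediate once you invoke the dominant real eigenvalue $|\lambda_1(\tilde A)|>1$ furnished by your own attracting-fixed-point observation, giving $\|\tilde A^n\|\ge|\lambda_1(\tilde A)|^n\to\infty$.
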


\section{Preliminaries}\label{prel}

In this section we restrict our attention to matrices of dimension $2$. Suppose that $\A=\{A^{(i)}: i \in \I\}$ strictly preserves a cone $\mathcal{C}$. Then there exists a matrix $B$ and a set of positive matrices $\mathcal{M}=\{M^{(i)}: i \in \I\}$ such that for each $i \in \I$, $M^{(i)}=B A^{(i)} B^{-1}$. Moreover, there exists a constant $C>0$ that depends only on $s$ and $B$ such that for any $n \in \N$ and $i_j \in \I$,
$$\frac{1}{C}\phi^s(A^{(i_1)} \cdots A^{(i_n)})\leq\phi^s(M^{(i_1)} \cdots M^{(i_n)})= \phi^s(B^{-1}A^{(i_1)} \cdots A^{(i_n)}B) \leq C\phi^s(A^{(i_1)} \cdots A^{(i_n)}).$$
Therefore $P_{\A}(s)=P_{\M}(s)$ for all $s$. Now, using the notation of the previous section, fix $\t_0 \in (-1,1)^{4|\I|}$ such that $\A_{\t_0}=\A$ and for $\t$ in a neighbourhood of $\t_0$ let $\M_{\t}$ denote the set of matrices in $\A_{\t}$ which have been conjugated by $B$ as above, so that $\M_{\t_0}=\M$. Write $\u_0$ to be the entries of the matrices in $\M_{\t_0}$. It is easy to see that the entries $\textbf{u}$ of the matrices in $\M_{\t}$ are linear combinations of the entries in $\t$. Therefore if $P_{\M_{\t}}(s)$ is analytic in $\mathbf{u}$ in some neighbourhood of $\mathbf{u}_0$, it follows that $P_{\A_{\t}}(s)=P_{\M_{\t}}(s)$ is analytic in $\t$ in some neighbourhood of $\t_0$. Therefore it is sufficient to prove theorem \ref{main} under the assumption that $\A_{\t_0}$ is a set of positive matrices.

Let $\A=\{A^{(i)}: i \in \I\}$ be a set of invertible positive matrices. For $k \in \mathbb{N}$ let $\mathcal{I}^k$ denote words of length $k$ over the alphabet $\mathcal{I}$ and let
\[
\mathcal{I}^* = \bigcup_{k \in \mathbb{N}} \mathcal{I}^k
\]
be the set of all finite words over the alphabet $\mathcal{I}$.  

 Also, let
$$\A^n=\{A_{i_1} \cdots A_{i_n}: i_j \in \I\}$$
and $\A^{\ast}=\bigcup_{n \in \N} \A^n$. For $\i = (i_1, i_2, \dots, i_k) \in \mathcal{I}^k$, write
\[
A^{(\i)} \ = \ A^{({i_1})}  \cdots  A^{({i_k})}.
\]

\subsection{Hardy-Hilbert space} Let $D$ be a disc of radius $\rho$ centred at $c \in \C$. The Hardy-Hilbert space $H^2(D)$ consists of all functions $f$ which are analytic on $D$ and such that $\sup_{r< \rho} \int_0^1 |f(c+re^{2\pi i t})|^2 \textup{d}t< \infty$. The inner product on $H^2(D)$ is defined by 
$$\langle f, g \rangle_{H^2}= \int_0^1 f(c+re^{2\pi i t})\overline{g(c+e^{2\pi i t})} \textup{d}t$$
which is well-defined since any element of $H^2(D)$ extends as an $L^2$ function of the boundary $\partial D$. The norm of $f \in H^2(D)$ is then given as $\norm{f}_{H^2}=\langle f,f \rangle_{H^2}^{\frac{1}{2}}$. 

An alternative characterisation of $H^2(D)$ is given as the space of all functions $f$ which are analytic on $D$ which can be expressed in the form
$$f(z)= \sum_{n=0}^{\infty} \alpha_k(f) \frac{(z-c)^k}{\rho^k}$$
for some square-summable sequence of complex numbers $\{\alpha_k(f)\}_{k=0}^{\infty}$. We will primarily utilise this second characterisation of $H^2(D)$.
This second characterisation permits us to write the norm of $f \in H^2(D)$ alternatively as
$$\norm{f}_{H^2}= \left(\sum_{k=0}^{\infty} |\alpha_k(f)|^2\right)^{\frac{1}{2}}.$$

If $b:D \to \C$ is bounded and analytic on $D$ and $f \in H^2(D)$ then $bf \in H^2(D)$ and
\begin{eqnarray}
\norm{bf}_{H^2} \leq \norm{b}_{\infty} \norm{f}_{H^2}, \label{bf}
\end{eqnarray}
see \cite[\S 1.2]{shapiro}. In particular if $f$ is bounded and analytic on $D$ then $f \in H^2(D)$ and $\norm{f}_{H^2} \leq \norm{f}_{\infty}$. 

Throughout the rest of this paper we fix $D$ to be the disc of radius $\frac{1}{2}$ centred at $\frac{1}{2}$.

\subsection{Perturbation theory}

Let $F:\C^n \to \C$ be a function. We recall that $F$ is called (complex) analytic in a neighbourhood $U \subset \C^n$ if for each $(a_1, \ldots, a_n) \in U$ one can write
$$F(z_1, \ldots, z_n)= \sum_{k_1, \ldots, k_n \in \N} c_{k_1, \ldots, k_n} (z_1-a_1)^{k_1} \cdots (z_n-a_n)^{k_n}$$
where $c_{k_1, \ldots, k_n} \in \C$ and the series is convergent to $F(z_1, \ldots, z_n)$ for all
 $(z_1, \ldots, z_n) $ in a neighbourhood of $(a_1, \ldots, a_n)$. By Hartogs's theorem a function $F:\C^n \to \C$ is (complex) analytic if and only if it is (complex) analytic in each variable separately. 

Let $U \subset \C$ be an open neighbourhood, $B$ be a Banach space of functions equipped with a norm $\norm{\cdot}$ and $L_{t}:B \to B$ be operators for each $t \in U$. We say that $\{L_t\}_{t \in U}$ is an analytic family of operators if for each $a \in U$ there exists a constant $0<r<1$ and there exist operators $L_k: B \to B$ with $\norm{L_k} =O(r^{-k})$ such that
$$L_{t}= \sum_{k \in \N}  (t-a)^{k} L_{k}$$
for all $t \in B(a, r)$, where convergence of the series is understood in the sense of the operator norm topology.

 The following perturbation theorem is a special case of the more general analytic perturbation theorem presented in \cite[Theorem 3.8]{hennion}.

\begin{prop}[Analytic perturbation theorem] \label{apt} Let $U \subset \C$ be an open neighbourhood of $t_0 \in \C$ and $\{L_{t}\}_{t \in U}$ be an analytic family of bounded linear operators on a Banach space. Suppose $L_{t_0}$ has a maximal eigenvalue $\lambda_1(L_{t_0})$ which is separated from the rest of the spectrum of $L_{t_0}$ and $\lambda_1(L_{t_0})$ is (algebraically \emph{and} geometrically) simple. Then there exists an open neighbourhood $U' \subset U$ of $t_0$ such that for all $t \in U'$, $L_t$ also has a maximal simple eigenvalue $\lambda_1(L_t)$ which is separated from the rest of the spectrum of $L_t$. Moreover $\lambda_1(L_t)$ is an analytic function of  $t \in U'$. \end{prop}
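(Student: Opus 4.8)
The plan is to obtain Proposition~\ref{apt} by the classical resolvent (Riesz projection) method of analytic perturbation theory; it is in any case a special case of \cite[Theorem~3.8]{hennion}, so what follows is only a sketch of the standard self-contained argument. First I would fix an isolating contour: since $\lambda_1(L_{t_0})$ is an isolated point of $\sigma(L_{t_0})$ separated from the remainder of the spectrum, I choose a small positively oriented circle $\Gamma\subset\C$ so that $\lambda_1(L_{t_0})$ lies in the bounded component of $\C\setminus\Gamma$ while every other point of $\sigma(L_{t_0})$ lies in the unbounded component. Using that $t\mapsto L_t$ is analytic in the operator-norm sense and that $\sup_{\zeta\in\Gamma}\norm{(\zeta-L_{t_0})^{-1}}<\infty$ by compactness of $\Gamma$, the identity
\[
(\zeta-L_t)^{-1}=\bigl(\id-(\zeta-L_{t_0})^{-1}(L_t-L_{t_0})\bigr)^{-1}(\zeta-L_{t_0})^{-1}
\]
together with a Neumann expansion of the first factor shows that there is an open neighbourhood $U'\subset U$ of $t_0$ on which, for every $\zeta\in\Gamma$, the resolvent $(\zeta-L_t)^{-1}$ is well defined, is analytic in $t$, and is bounded uniformly in $\zeta\in\Gamma$; in particular $\Gamma\subset\rho(L_t)$ for all $t\in U'$.

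Next I would introduce the Riesz spectral projection $P_t:=\tfrac{1}{2\pi i}\oint_\Gamma(\zeta-L_t)^{-1}\,\textup{d}\zeta$, which, being the integral of an analytic uniformly bounded family over the fixed compact contour $\Gamma$, is analytic in $t\in U'$. Since $\lambda_1(L_{t_0})$ is algebraically simple, $P_{t_0}$ has rank one; shrinking $U'$ so that $\norm{P_t-P_{t_0}}<1$ on it, the idempotents $P_t$ then all have rank one. By the Riesz functional calculus the part of $\sigma(L_t)$ enclosed by $\Gamma$ equals the spectrum of the restriction of $L_t$ to the one-dimensional subspace $\operatorname{range}P_t$, hence reduces to a single eigenvalue $\lambda_1(L_t)$ of algebraic multiplicity one; and since $\Gamma\subset\rho(L_t)$, the rest of $\sigma(L_t)$ lies outside $\Gamma$, so $\lambda_1(L_t)$ is separated from it.

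Finally, to prove analyticity of $t\mapsto\lambda_1(L_t)$ and recover maximality, I would fix an eigenvector $v_0$ of $L_{t_0}$ for $\lambda_1(L_{t_0})$ and set $v_t:=P_tv_0$; then $t\mapsto v_t$ is analytic on $U'$ with $v_{t_0}=v_0\neq0$, so after a further shrinking $v_t\neq0$ on $U'$, and since $\operatorname{range}P_t$ is one-dimensional and $L_t$-invariant it coincides with the eigenspace of $\lambda_1(L_t)$, giving $L_tv_t=\lambda_1(L_t)v_t$. Fixing a bounded linear functional $\varphi$ with $\varphi(v_0)=1$ we get $\varphi(v_t)\to1$ as $t\to t_0$, hence $\varphi(v_t)\neq0$ near $t_0$ and
\[
\lambda_1(L_t)=\frac{\varphi(L_tv_t)}{\varphi(v_t)},
\]
a ratio of analytic functions with non-vanishing denominator, hence analytic. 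In particular $\lambda_1(L_t)\to\lambda_1(L_{t_0})$, and since by upper semicontinuity of the spectrum for norm-continuous families the part of $\sigma(L_t)$ outside $\Gamma$ stays close to $\sigma(L_{t_0})\setminus\{\lambda_1(L_{t_0})\}$, which is contained in a disc of radius strictly smaller than $|\lambda_1(L_{t_0})|$, it follows that $\lambda_1(L_t)$ is again the maximal eigenvalue for $t$ near $t_0$. I expect the only genuinely delicate point to be this last one, the control of the remainder of the spectrum needed to retain maximality; the rest is the routine holomorphic functional calculus.
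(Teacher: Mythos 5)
The paper does not actually prove Proposition \ref{apt}; it simply states it and cites \cite[Theorem 3.8]{hennion} for a more general version. Your self-contained argument via the Riesz projection and holomorphic functional calculus is correct and is essentially the standard proof underlying such perturbation theorems (it is also what one finds behind the cited result and behind the relevant sections of \cite{kato}). The key steps you flag are exactly the right ones: the Neumann-series expansion of the resolvent to get analyticity and invertibility of $\zeta-L_t$ uniformly over the compact contour $\Gamma$; the rank stability of Riesz projections under small norm perturbation (where algebraic simplicity enters, since $P_{t_0}$ has rank one and idempotents at distance less than one have equal rank); and the formula $\lambda_1(L_t)=\varphi(L_t v_t)/\varphi(v_t)$ with $v_t=P_t v_0$ to extract analyticity of the eigenvalue from analyticity of the projection. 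You are also right that the only genuinely delicate point is recovering maximality of $\lambda_1(L_t)$; your appeal to upper semicontinuity of the spectrum in the norm topology, combined with the fact that $\sigma(L_{t_0})\setminus\{\lambda_1(L_{t_0})\}$ lies in a closed disc of radius strictly less than $|\lambda_1(L_{t_0})|$, does handle it, provided one shrinks $\Gamma$ and $U'$ so that the two $\varepsilon$-neighbourhoods (of $\lambda_1(L_{t_0})$ and of the rest of the spectrum) remain disjoint with the former entirely of larger modulus. In short: the paper buys brevity by deferring to a reference, while your route makes the use of each hypothesis (norm-analyticity, spectral separation, and simplicity) explicit; both are valid.
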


We will also require the following perturbation theorem.

\begin{prop}
Suppose $L:B \to B$ is a bounded linear operator on a Banach space that has a simple eigenvalue of maximum modulus which is separated from the rest of the spectrum of $L$. Then there exists $\epsilon>0$ such that for all bounded linear operators $T:B \to B$ with the property that
$$\norm{L-T}< \epsilon,$$
the operator $T$ also has a simple eigenvalue of maximum modulus which is separated from the rest of the spectrum of $T$.
\label{kato}
\end{prop}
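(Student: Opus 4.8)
The plan is to run the classical Riesz-projection argument from the perturbation theory of linear operators; the only twist relative to Proposition~\ref{apt} is that here we are given norm-closeness of $T$ to $L$ rather than membership in an analytic family. Write $\lambda_1=\lambda_1(L)$ for the distinguished eigenvalue. Assuming, as we may, that $\sigma(L)\neq\{\lambda_1\}$ (otherwise the Riesz projection below is the identity, forcing $\dim B=1$, and the claim is immediate), set $\rho_0:=\sup\{|\mu|:\mu\in\sigma(L),\ \mu\neq\lambda_1\}$; by hypothesis $\rho_0<|\lambda_1|$ and $\lambda_1$ is isolated in $\sigma(L)$. Choose $r,\delta>0$ with $\rho_0<r<|\lambda_1|-\delta$ and $\delta<1$, small enough that $\overline{B(\lambda_1,\delta)}$ is disjoint from $\overline{B(0,r)}$ and meets $\sigma(L)$ only at $\lambda_1$. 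Let $\gamma=\partial B(\lambda_1,\delta)$, positively oriented, and let $P_L=\frac{1}{2\pi i}\oint_\gamma (z-L)^{-1}\,dz$; since $\lambda_1$ is simple, $P_L$ is a bounded idempotent of rank one. The goal is to show that for $T$ near $L$ the analogous integral $P_T$ is well defined, is again a rank-one idempotent, and isolates an eigenvalue of $T$ that is forced to be simple and of maximal modulus.

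First I would establish resolvent continuity on the region that matters. The open set $\Omega:=\C\setminus(\overline{B(0,r)}\cup B(\lambda_1,\delta))$ lies in the resolvent set $\rho(L)$; it is unbounded, so continuity of $z\mapsto(z-\cdot)^{-1}$ there is not automatic, but if $\norm{L-T}\leq 1$ then $\sigma(T)\subseteq\overline{B(0,\norm{T})}\subseteq\overline{B(0,\norm{L}+1)}$, so it suffices to work on the compact set $K:=\Omega\cap\overline{B(0,\norm{L}+1)}$, which contains $\gamma$. Put $M:=\sup_{z\in K}\norm{(z-L)^{-1}}<\infty$. For $z\in K$ the Neumann series shows that $z-T$ is invertible whenever $\norm{(z-L)^{-1}}\,\norm{L-T}<1$, with $\norm{(z-T)^{-1}}\leq\norm{(z-L)^{-1}}/(1-\norm{(z-L)^{-1}}\,\norm{L-T})$; hence for $\norm{L-T}<\tfrac{1}{2M}$ we obtain $K\subseteq\rho(T)$, the bound $\norm{(z-T)^{-1}}\leq 2M$ on $K$, and, via the resolvent identity $(z-T)^{-1}-(z-L)^{-1}=(z-T)^{-1}(T-L)(z-L)^{-1}$, the estimate $\norm{(z-T)^{-1}-(z-L)^{-1}}\leq 2M^2\norm{L-T}$ on $K$. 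In particular $\sigma(T)\subseteq\overline{B(0,r)}\cup B(\lambda_1,\delta)$.

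Next I would compare the projections. For such $T$, $P_T:=\frac{1}{2\pi i}\oint_\gamma (z-T)^{-1}\,dz$ is a well-defined bounded idempotent, namely the spectral projection of $T$ for the part of $\sigma(T)$ inside $B(\lambda_1,\delta)$, and the integrand estimate gives $\norm{P_L-P_T}\leq 2\delta M^2\norm{L-T}$. Shrinking $\norm{L-T}$ so that this is $<1$, the standard fact that two idempotents at distance $<1$ have isomorphic ranges yields $\mathrm{rank}\,P_T=\mathrm{rank}\,P_L=1$; equivalently $\sigma(T)\cap B(\lambda_1,\delta)$ is a single point $\lambda_1(T)$, an eigenvalue of $T$ of algebraic --- hence geometric --- multiplicity one. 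Since $\sigma(T)\setminus\{\lambda_1(T)\}\subseteq\overline{B(0,r)}$ while $|\lambda_1(T)|\geq|\lambda_1|-\delta>r$, the eigenvalue $\lambda_1(T)$ has strictly the largest modulus in $\sigma(T)$ and is separated from the rest of the spectrum by the annulus $\{r<|z|<|\lambda_1|-\delta\}$. Taking $\epsilon:=\min\{1,\ \tfrac{1}{2M},\ \tfrac{1}{4\delta M^2}\}$ then gives the proposition.

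The one step requiring more than bookkeeping is that the complement of the two discs enclosing $\sigma(L)$ is unbounded, so that resolvent continuity there is not free; this is precisely what the a priori spectral-radius bound $\sigma(T)\subseteq\overline{B(0,\norm{L}+1)}$ is used for, reducing the comparison to a compact region. Everything else is the textbook stability of an isolated simple eigenvalue under small norm perturbations.
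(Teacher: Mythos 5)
Your proof is correct, and it is the standard Riesz-projection/contour-integral argument that underlies the two theorems from Kato's book which the paper cites in lieu of a proof; you have simply unpacked the citation. The estimates are all in order (the Neumann-series bound, the resolvent-identity bound $\norm{(z-T)^{-1}-(z-L)^{-1}}\leq 2M^2\norm{L-T}$ on the compact set $K$, the consequent bound $\norm{P_L-P_T}\leq 2\delta M^2\norm{L-T}$, and the rank-preservation of nearby idempotents), and the a priori confinement of $\sigma(T)$ to $\overline{B(0,\norm{L}+1)}$ correctly handles the unboundedness of the exterior region.
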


\begin{proof}
Follows from \cite[Theorems IV.2.14 and IV.3.16]{kato}.
\end{proof}

\subsection{Transfer operator} \label{op} 

If $A$ is an invertible $2 \times 2$ matrix, we have a simpler characterisation of $\phi^s$ given by
\begin{equation}
\phi^s(A) =\begin{cases} \norm{A}^{s} & s \in [0,1)  \\
\norm{A}^{2-s} \, |\det(A)|^{s-1} & s \in [1, 2]\\ \end{cases} \label{simpler}
\end{equation}
where we have used the identities $\alpha_1(A)=\|A\|$ and  $|\det A|=\alpha_1(A)\alpha_2(A)$.

Given a (real or complex valued) matrix $A= \begin{pmatrix} a&b \\c&d \end{pmatrix}$ we define the map $w_A:D \to \C$ by
$$w_A(z)= (a+c-b-d)z+b+d.$$
If $A$ is a matrix such that $w_A(z) \neq 0$ on $D$ then we also define the function $\phi_A: D \to \C$ by
$$\phi_A(z)= \frac{(a-b)z+b}{(a+c-b-d)z+b+d}.$$
If $A$ is a real positive matrix, this can be understood as the action of the matrix on the first co-ordinate of representative vectors in $\R\P_+^1$, where $\R\P_+^1$ denotes the space of positive directions in $\R^2$. In particular let $\Delta=\{(x, 1-x): x \in (0,1)\}$ which are representative vectors for $\R\P_+^1$. Then
$$
\begin{pmatrix} \phi_A(x) \\1-\phi_A(x) \end{pmatrix}= \frac{1}{w_A(x)} A \begin{pmatrix} x \\1-x \end{pmatrix} \in \Delta.
$$
Moreover for any $\x=(x,1-x) \in \Delta$, $w_A(x)= \langle A\x, \u \rangle$ where $\u:=(1,1)$. 

Throughout the rest of this section we make the following assumption on the parameter $\t \in \C^{4|\I|}$. Let $\C_{+}$ denote the right half plane. 
\begin{ass}
We assume that $\t \in \C^{4|\I|}$ satisfies: 
\begin{enumerate}[(i)]
\item $\Re(t_{4k-3}t_{4k}-t_{4k-2}t_{4k-1}) \neq 0$ for all $k \in \I$,
\item $ \overline{\phi_{A_{\t}^{(k)}}(D)} \subset D$ for all $k \in \I$, 
\item $\overline{w_{A_{\t}^{(k)}}(D)} \subset \C_{+}$ for all $k \in \I$ and
\item for some distinct $i, j \in \I$, $\phi_{A_{\t}^{(i)}} \neq \phi_{A_{\t}^{(j)}}$.
\end{enumerate}
We denote the set of $\t$ that satisfies (i)-(iv) by $\Omega$. \label{ass}
\end{ass}

\begin{rem} Suppose $\A_{\t}$ is an irreducible set of positive invertible matrices. Then (i) is satisfied because the determinant of each matrix is non-zero and (iv) is satisfied because $\A_{\t}$ is irreducible. Denoting $A= \begin{pmatrix} a&b\\c&d \end{pmatrix} \in \A_{\t}$, then (ii) is satisfied because $\phi_{A}$ maps $D$ to a disk centred in the real axis whose boundary passes through the points $0< \frac{a}{a+c}, \frac{b}{b+d}<1$ and (iii) is satisfied because $w_A$ maps $D$ to a disk centred in the real axis whose boundary passes through the points $a+c$ and $b+d$. Therefore $\t \in \Omega$. In fact if it is assumed that $\t \in \Omega \cap (0,1)^{4|\I|}$ then $\A_{\t}$ is necessarily an irreducible set of positive invertible matrices. \end{rem}

For $\t \in \Omega$ and $A \in \A_{\t}$ define the composition operator $\c_A: H^2(D) \to H^2(D)$ by $\c_A f=f \circ \phi_A$. Note that since $\phi_{A}$ is an analytic self-map of $D$, by Littlewood's theorem \cite[page 11]{shapiro} $\c_A$ preserves $H^2(D)$. Observe that for any $A_1, \ldots , A_n \in \A_{\t}$, 
\begin{eqnarray}
\phi_{A_1 \cdots A_n}= \phi_{A_1} \circ \cdots \circ \phi_{A_n}.
\label{phi eqn}
\end{eqnarray}
Also notice that for any $A_1, \ldots , A_n \in \A_{\t}$,
\begin{eqnarray}
w_{A_1 \cdots A_n}=(w_{A_1}\circ\phi_{A_2 \cdots A_n})(w_{A_2}\circ \phi_{A_3 \cdots A_n}) \cdots (w_{A_{n-1}} \circ \phi_{A_n}) w_{A_n}. \label{w eqn}
\end{eqnarray}

Let $s \in \{z \in \C: |z| \leq 2\}$, $\t \in \Omega$ and $k \in \I$. Define
\begin{equation}
\psi_{A^{(k)}_{\t},s}(z)=\begin{cases} w_{A_{\t}^{(k)}}(z)^s & \text{if}\ 0\leq |s| \leq 1 \\
w_{A^{(k)}_{\t}}(z)^{2-s}(t_{4k-3}t_{4k}-t_{4k-2}t_{4k-1})^{s-1} & \text{if $1<|s|\leq 2$ and $\Re(t_{4k-3}t_{4k}-t_{4k-2}t_{4k-1})>0$} \\
w_{A^{(k)}_{\t}}(z)^{2-s}(t_{4k-2}t_{4k-1}-t_{4k-3}t_{4k})^{s-1} &\text{if $1<|s|\leq 2$ \text{and} $\Re(t_{4k-2}t_{4k-1}-t_{4k-3}t_{4k})>0$} \end{cases}
\end{equation}
where $f(z)^s=\exp(s \log f(z))$ where $\log$ is understood as the unique analytic function from $\C_{+}$ to $\C$ such that $\exp \log z=z$. For each $\i \in \I^{\ast}$, $\psi_{A_{\t}^{(\i)},s}$ can be defined analogously. Then for $A \in \A_{\t}^{\ast}$ define the multiplication operator $\M_{A,s}: H^2(D) \to H^2(D)$ by $\M_{A,s}f=\psi_{A,s} \cdot f$. $\M_{A,s}$ preserves $H^2(D)$ since $\psi_{A,s}$ is bounded and analytic on $D$ \cite[page 11]{shapiro}.

Finally, for $s \in\{z \in \C: |z| \leq 2\}$ and $\t \in \Omega$  we define the weighted composition operator $\l_{s,\t}: H^2(D) \to H^2(D)$ by
$$\l_{s,\t} f(z)=\sum_{A \in \A_{\t}} \M_{A,s}\c_{A} f(z)= \sum_{A \in \A_{\t}} \psi_{A,s}(z) f( \phi_{A}(z)).$$

Notice that by (\ref{phi eqn}), (\ref{w eqn}) and the fact that the determinant is a multiplicative functional, the iterates of $\l_{s,\t}$ are given by
$$\l^n_{s,\t} f(z)= \sum_{A \in \A_{\t}^n} \psi_{A,s}(z) f( \phi_{A}(z)).$$

The following simple observation will allow us to relate the spectrum of $\l_{s,\t}$ with $\dim \A_{\t}$ whenever $\t \in (0,1)^{4|\I|}\cap \Omega$ and $s \in [0,2]$.

\begin{lma} \label{equivalence}
Fix $\t \in (0,1)^{4|\I|}\cap \Omega$. There exists a constant $c>0$ that depends only on $\t$ and $s$ such that for all $A \in \A_{\t}^{\ast}$  and $x \in (0,1)$,
\begin{eqnarray}
c^{-1}\phi^s(A) \leq \psi_{A,s}(x) \leq c \phi^s(A).\label{equiv}
\end{eqnarray}
\end{lma}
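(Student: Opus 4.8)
The plan is to establish the two-sided bound (\ref{equiv}) by reducing the multi-step estimate to a one-step estimate via the multiplicative formulas (\ref{phi eqn}), (\ref{w eqn}) and then controlling the ``ambient'' factors $w_A$ and $\phi_A$ on the interval $(0,1)$ uniformly. First I would treat the regime $s \in [0,1]$, where $\phi^s(A) = \norm{A}^s$ and $\psi_{A,s}(x) = w_A(x)^s$. Writing $\x = (x,1-x) \in \Delta$ and recalling the identity $w_A(x) = \langle A\x, \u\rangle$ with $\u = (1,1)$ from Subsection~\ref{op}, one sees that for a positive matrix $A$ the quantity $w_A(x)$ is comparable to $\norm{A\x}$ up to constants depending only on the geometry of $\Delta$ (since all norms on $\R^2$ are equivalent and $A\x$ has positive entries, $\norm{A\x}_1 = w_A(x)$). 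Then $\norm{A\x}$ is comparable to $\alpha_1(A) = \norm{A}$: the upper bound $\norm{A\x} \le \norm{A}$ is immediate, and the lower bound uses that a positive matrix $A$ which strictly preserves the cone maps the compact set $\overline{\Delta}$ into a compact subset of $(0,1)$, so the leading singular direction of $A$ is uniformly bounded away from being orthogonal to $\x$; quantitatively, $\phi_A(\overline{D}) \subset D$ (Assumption~\ref{ass}(ii)) forces the image direction to lie in a fixed compact subinterval, from which $\norm{A\x} \ge c\,\norm{A}$ follows with $c$ depending only on $\t$. Raising to the power $s$ and absorbing constants gives (\ref{equiv}) in this range.

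Next I would handle $s \in (1,2]$, where by (\ref{simpler}) we have $\phi^s(A) = \norm{A}^{2-s}|\det A|^{s-1}$ and by definition $\psi_{A,s}(x) = w_A(x)^{2-s}|\det A|^{s-1}$ (the determinant being $t_{4k-3}t_{4k} - t_{4k-2}t_{4k-1}$ for a generator, and multiplicative along words, so this is consistent for all $A \in \A_\t^\ast$). The determinant factors match exactly, so this case reduces to the same comparison $w_A(x) \asymp \norm{A}$ already established, now raised to the power $2 - s \in [0,1)$. For composite words $A = A^{(\i)}$ with $\i \in \I^\ast$, one does not iterate the one-step estimate $n$ times (which would give a constant growing like $c^{-n}$); instead one applies the one-step comparison directly to the matrix $A^{(\i)}$ itself, which is again a positive matrix strictly preserving the same cone $\mathcal{C}$, hence satisfies the same uniform geometric bounds with the \emph{same} constant $c$ (because $\phi_{A^{(\i)}}(\overline{D}) = \phi_{A^{(i_1)}} \circ \cdots \circ \phi_{A^{(i_k)}}(\overline{D}) \subset \phi_{A^{(i_1)}}(D) \subset$ a fixed compact subset of $(0,1)$, independent of the word length). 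This uniformity is the whole point: the cone-preservation hypothesis gives a contraction that makes all images land in a single fixed compact set.

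The main obstacle is making the lower bound $\norm{A\x} \ge c\,\norm{A}$ uniform over all $A \in \A_\t^\ast$ with a constant depending only on $\t$ (and $s$), not on the word length. The cleanest route is: since each generator strictly preserves $\mathcal{C}$, there is a fixed closed subcone $\mathcal{C}' = \overline{\bigcup_{i} A^{(i)}(\mathcal{C})} \subset \interior{\mathcal{C}} \cup \{0\}$ such that every $A^{(\i)}$ with $|\i| \ge 1$ maps $\mathcal{C} \setminus \{0\}$ into $\mathcal{C}'$; on the compact set $\{v \in \mathcal{C}' : \norm{v} = 1\}$ the functional $\langle \cdot, \u\rangle$ is bounded below by a constant $\kappa > 0$, and then for $\x \in \Delta$, writing $A\x = \norm{A\x}\cdot v$ with $v \in \mathcal{C}'$ of unit norm, we get $w_A(x) = \langle A\x,\u\rangle = \norm{A\x}\langle v,\u\rangle \ge \kappa\,\norm{A\x}$; combined with $\norm{A\x} \ge \alpha_2(A)$-type bounds this is not quite what we want, so instead I note directly that $\norm{A} = \alpha_1(A) = \sup_{\norm{y}=1}\norm{Ay}$ is attained at some unit vector $y$, and since $A$ maps the whole unit circle into directions in $\mathcal{C}'$, while $\x$ ranges over a fixed compact subset of directions in $\interior{\mathcal{C}}$, the angle between $A y$ and $A\x$ is controlled, giving $\norm{A\x} \ge c\norm{Ay} = c\norm{A}$ — here $c$ depends only on how far $\mathcal{C}'$ sits inside $\mathcal{C}$ and on $\Delta$, hence only on $\t$. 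A short compactness argument packages all of this. Once the one-step comparison $c^{-1}\norm{A} \le w_A(x) \le \norm{A}$ is in hand uniformly, raising to the appropriate power and multiplying by the (exactly matching) determinant factor when $s > 1$ completes the proof; I would also remark that the same argument shows $c$ can be taken locally uniform in $\t$, which is what later sections will need.
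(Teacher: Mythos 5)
Your overall strategy is the same as the paper's: both reduce the statement to the uniform two-sided comparison $w_A(x)\asymp\norm{A}$ for $A\in\A_{\t}^{\ast}$ and $x\in(0,1)$, observe that the determinant factors in $\psi_{A,s}$ and $\phi^s(A)$ match exactly when $s>1$, and then supply the comparison by a one-step compactness/cone argument rather than iterating word by word. The first half of your argument (the identity $w_A(x)=\langle A\x,\u\rangle=\norm{A\x}_1$ for positive $A$ and $\x\in\overline{\Delta}$, giving the upper bound and reducing the lower bound to $\norm{A\x}\gtrsim\norm{A}$) is correct and matches the paper's lines of reasoning.

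However, there is a genuine error in the step where you try to obtain the lower bound. You write that ``$A$ maps the whole unit circle into directions in $\mathcal{C}'$,'' which is false: a positive matrix sends the cone into a subcone, but vectors of mixed sign such as $(1,-1)$ may be sent anywhere in $\R^2$. The correct fact you want is that the leading right singular vector $y$ of a positive matrix $A$ can be taken to lie in $\overline{\Delta}$ (this follows from Perron--Frobenius applied to the positive matrix $A^{T}A$), so that $Ay\in A(\overline{\Delta})\subset\mathcal{C}'$. Even after this correction, the conclusion ``the angle between $Ay$ and $A\x$ is controlled, giving $\norm{A\x}\ge c\norm{Ay}$'' does not follow: two vectors can point in nearly the same direction while having wildly different lengths, so direction control alone cannot bound $\norm{A\x}/\norm{Ay}$ below. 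The bound $\norm{A\x}\ge\alpha_2(A)\norm{\x}$ that you dismissed is indeed useless here precisely because $\alpha_2(A)/\alpha_1(A)$ decays exponentially in the word length.

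The missing ingredient is a comparison between the two columns of $A$. In the $L^1$ norm one has the exact identity $w_A(x)=x\norm{Ae_1}_1+(1-x)\norm{Ae_2}_1$, so $w_A(x)\ge\min_j\norm{Ae_j}_1$ uniformly in $x\in[0,1]$; since $\norm{A}_1=\max_j\norm{Ae_j}_1$, it now suffices to show that the two column magnitudes are comparable with a constant depending only on $\t$. This holds because for $|\i|\ge 2$ one can write $A^{(\i)}=B\,A^{(i_n)}$, so the columns are $Bv_1$ and $Bv_2$ where $v_1=A^{(i_n)}e_1$ and $v_2=A^{(i_n)}e_2$ range over a fixed finite set of vectors in $\interior(\Delta)$; any two such vectors satisfy $v_1\le Kv_2$ and $v_2\le Kv_1$ entrywise for a fixed $K$, and entrywise inequalities between nonnegative vectors are preserved by the nonnegative matrix $B$, so $\norm{Bv_1}_1$ and $\norm{Bv_2}_1$ are $K$-comparable. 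With this fact in place your plan goes through. The paper's proof accomplishes the same thing more tersely, by reducing to normalised $\x$ and $A$ and invoking compactness and continuity; your version makes the cone geometry explicit, but the step you called ``a short compactness argument packages all of this'' is precisely where the column-comparability fact has to be used.
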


 \begin{proof}
 Observe that by definition of $\psi_{A,s}$ and the characterisation of $\phi^s$ given in (\ref{simpler}), it is sufficient to show that there exists a constant $c>0$ that depends only on the set $\A_{\t}$ such that for all $x \in (0,1)$ and $A \in \A_{\t}^{\ast}$
\begin{eqnarray}
c^{-1}\norm{A} \leq \langle A\x,\u \rangle \leq c\norm{A}\label{dot eqn} \end{eqnarray}
\label{dot norm}
where $\x=(x,1-x)$ and $\u=(1,1)$. Fix $A \in \A_{\t}^{\ast}$. To verify the right hand side, notice that by the Cauchy-Schwarz inequality,
$$|\langle \mathbf{x}, \mathbf{u}\rangle | \leq \sqrt{2} \norm{\mathbf{x}}$$
and therefore since $x <1$,
$$|\langle A\mathbf{x}, \mathbf{u} \rangle| \leq \sqrt{2}\norm{A\mathbf{x}} \leq \sqrt{2}\norm{A}.$$

To verify the left hand side we begin by claiming there exist uniform constants $\epsilon , \delta>0$ such that
\begin{eqnarray}
|\langle A\mathbf{x}, \mathbf{u}\rangle | &\geq& \epsilon \norm{A \mathbf{x}} \label{epsilon} \\
\norm{A \mathbf{x}} &\geq & \delta \norm{A} \norm{\mathbf{x}} \label{delta}
\end{eqnarray}
which are independent of the choice of $\x$ and $A$. Observe that it is enough to show that (\ref{epsilon}) and (\ref{delta}) hold uniformly for any $\mathbf{x}$ and $A$ with $\norm{\mathbf{x}}=\norm{A}=1$. By compactness of $[0,1]$ and continuity of $x \mapsto |\langle A\x, \mathbf{u} \rangle|$ and $x \mapsto \norm{A \x}$ it is sufficient to show that $|\langle A\mathbf{x}, \mathbf{u}\rangle| \neq 0$ and $\norm{A \mathbf{x}} \neq 0$ which both clearly hold. Therefore there exist uniform constants $\epsilon, \delta>0$ such that (\ref{epsilon}) and (\ref{delta}) hold. Therefore for all $x \in (0,1)$ and $\mathbf{x}=(x, 1-x)$,
\begin{eqnarray*}
|\langle A\mathbf{x}, \mathbf{u}\rangle| &\geq& \epsilon \norm{A \mathbf{x}} \\
&\geq& \epsilon \delta \norm{A} \norm{\mathbf{x}} \\
&\geq& \frac{\epsilon \delta}{\sqrt{2}} \norm{A}|\langle\mathbf{x}, \mathbf{u}\rangle|= \frac{\epsilon \delta}{\sqrt{2}} \norm{A}.
\end{eqnarray*}
\end{proof}

Since the work of Ruelle \cite{ruelle}, it has been well understood that analytic weighted composition operators acting on spaces of analytic functions have strong spectral properties. By invoking for instance \cite[Proposition 2.10]{oj} we can deduce that $\l_{s,\t}$ is a compact operator whenever $\t \in\Omega$  and $s \in \{z: |z| \leq 2\}$. We recall the following version of the Krein-Rutman theorem. 

\begin{prop}
Let $X$ be a Banach space and $K \subset X$ be closed convex set such that
\begin{enumerate}[(i)]
\item $\lambda K \subset K$ for all $\lambda \geq 0$,
\item $K \cap (-K)=\{0\}$ and
\item $\interior{K} \neq \emptyset$.
\end{enumerate}
Assume that $L:X \to X$ is a compact linear operator such that $LK \subseteq K$. Suppose that for all $f \in K \setminus \{0\}$ there exists $n \in \N$ such that $L^nf \in \interior{K}$. Then its spectral radius $\rho(L)>0$ and $\rho(L)$ is a simple eigenvalue with an eigenfunction $f \in K$. Moreover, $L$ does not have any other eigenvalues of modulus $\rho(L)$.
\label{krt}
\end{prop}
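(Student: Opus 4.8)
The plan is to reinterpret $K$ as the positive cone of a partial order $\le$ on $X$ and run the standard Krein--Rutman machinery, with the eventual-strong-positivity hypothesis doing the work of ``primitivity''. First I would record the elementary consequences of the hypotheses on $K$. Since $K\cap(-K)=\{0\}$ we have $0\notin\interior K$, so separating $\{0\}$ from the open convex cone $\interior K$ by Hahn--Banach gives $\psi_0\in X^*\setminus\{0\}$ with $\psi_0\ge 0$ on $K$ and $\psi_0>0$ on $\interior K$. Every $u\in\interior K$ is an \emph{order unit} with a uniform constant: if $B(0,\delta)\subseteq\{z:u\pm z\in K\}$ then $-(\|x\|/\delta)u\le x\le(\|x\|/\delta)u$ for all $x$, which also yields $\interior K+K\subseteq\interior K$ and that $X=K-K$ with a uniform bound on the decomposition. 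The operator $L$ and, for $\lambda>\rho(L)$, the resolvent $R(\lambda)=(\lambda-L)^{-1}=\sum_{n\ge 0}\lambda^{-n-1}L^n$ are then monotone, and $R(\lambda)$ maps $\interior K$ into itself.

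Second, I would prove $\rho:=\rho(L)>0$ and produce a positive eigenvector. Fixing $u\in\interior K$, the hypothesis gives $n_0$ with $L^{n_0}u\in\interior K$, hence $L^{n_0}u\ge tu$ for some $t>0$; monotonicity gives $L^{kn_0}u\ge t^ku$, so applying $\psi_0$ forces $\|L^{kn_0}\|\gtrsim t^k$ and $\rho\ge t^{1/n_0}>0$. Compactness of $L$ then makes $\rho\in\sigma(L)$ an eigenvalue. To place an eigenvector in $K$: as $\lambda\downarrow\rho$ one has $\|R(\lambda)\|\to\infty$; writing a norming sequence of unit vectors as uniformly bounded differences of elements of $K$ and applying $R(\lambda)$ produces $a_\lambda\in K$ with $\|R(\lambda)a_\lambda\|\to\infty$; normalising $z_\lambda=R(\lambda)a_\lambda/\|R(\lambda)a_\lambda\|\in K$ one has $(\lambda-L)z_\lambda\to 0$, and compactness of $L$ lets one extract $z_\lambda\to u_*$ with $Lu_*=\rho u_*$, $u_*\in K\setminus\{0\}$. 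Since $L^nu_*=\rho^nu_*$ and some $L^nu_*\in\interior K$, in fact $u_*\in\interior K$. The dual part of the classical Krein--Rutman theorem (the cone $K$ is total because $\interior K\ne\emptyset$) supplies $\psi_*\in K^*\setminus\{0\}$ with $L^*\psi_*=\rho\psi_*$; since each $f\in K\setminus\{0\}$ has $L^nf\in\interior K$ for some $n$ and $\psi_*$ is $>0$ on $\interior K$, the identity $\rho^n\psi_*(f)=\psi_*(L^nf)>0$ upgrades $\psi_*$ to being \emph{strictly} positive on all of $K\setminus\{0\}$.

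Third, simplicity. For geometric simplicity, given a real eigenvector $v$ with $Lv=\rho v$, replace $v$ by $-v$ if necessary so that $\tau:=\sup\{t\ge 0:u_*-tv\in K\}$ is finite (it cannot be infinite for both $v$ and $-v$, by pointedness of $K$) and $>0$ (as $u_*\in\interior K$). Then $w:=u_*-\tau v\in K$ satisfies $Lw=\rho w$; if $w\ne 0$ then $L^nw=\rho^nw\in\interior K$ for some $n$, so $w\in\interior K$, whence $u_*-(\tau+\epsilon)v\in K$ for small $\epsilon>0$, contradicting maximality of $\tau$; hence $w=0$ and $v\in\R u_*$. Complex eigenvectors split into real and imaginary parts, so $\dim\ker(\rho-L)=1$. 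For algebraic simplicity, if $(\rho-L)x=u_*$ then $\psi_*(u_*)=\psi_*((\rho-L)x)=\rho\psi_*(x)-(L^*\psi_*)(x)=0$, contradicting $u_*\in\interior K$ and strict positivity of $\psi_*$; since $\rho-L$ is Fredholm of index $0$ it has finite ascent, which is therefore $1$, so $\rho$ is a simple pole of the resolvent with one-dimensional Riesz projection, i.e.\ algebraically simple. (As a byproduct, $L$ being compact already makes $\rho>0$ isolated in $\sigma(L)$, giving the separation from the rest of the spectrum.)

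Finally, the absence of \emph{other} eigenvalues of modulus $\rho$ is the crux and the only place where the eventual-strong-positivity hypothesis — as opposed to mere positivity and irreducibility, which would permit a cyclic family $\rho e^{2\pi ik/h}$ — is genuinely needed; I expect this to be the main obstacle. I would first note $L$ is irreducible: a nontrivial closed $L$-invariant face $J\subsetneq K$ would trap the forward orbit of any $f\in J\setminus\{0\}$, but a proper face is disjoint from $\interior K$ (any $f\in J\cap\interior K$ is an order unit, so the face property forces $J=K$), contradicting the hypothesis. For an irreducible compact positive operator the peripheral point spectrum has the form $\{\rho e^{2\pi ik/h}:0\le k<h\}$ with an associated cyclic decomposition of $K$ into $h$ pairwise-incomparable faces permuted by $L$; if $h\ge 2$ no forward orbit starting in one of these faces ever enters $\interior K$, again impossible, so $h=1$. (Alternatively, one can use compactness of $L$ to promote the pointwise hypothesis to the statement that some power $L^N$ maps $K\setminus\{0\}$ into $\interior K$, and then apply Birkhoff's contraction theorem for the Hilbert projective metric on $\interior K$ to obtain a spectral gap directly.) Assembling the four steps: $\rho(L)>0$ is a geometrically and algebraically simple eigenvalue with eigenfunction $u_*\in K$ (indeed in $\interior K$), it is the unique eigenvalue of modulus $\rho(L)$, and it is isolated in $\sigma(L)$ — which is precisely the assertion of the proposition.
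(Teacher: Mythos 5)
The paper's own ``proof'' of this proposition is a bare citation to Krasnosel'skii's monograph (Theorems 2.5, 2.10 and 2.13 of \cite{kras}), so by giving a self-contained argument you are necessarily taking a different route, and most of it is the correct standard Krein--Rutman machinery, with the hypothesis ``for every $f\in K\setminus\{0\}$ some $L^nf$ lands in $\interior{K}$'' playing the role of strong positivity. The order-unit bookkeeping, the sliding argument with $\tau$ for geometric simplicity, algebraic simplicity via a dual eigenvector strictly positive on $K\setminus\{0\}$ together with Fredholmness of $\rho(L)I-L$, and the isolation of $\rho(L)$ from the rest of $\sigma(L)$ are all sound.

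Two steps are not yet proofs. First, you assert that ``compactness of $L$ then makes $\rho\in\sigma(L)$ an eigenvalue'' and then use $\|R(\lambda)\|\to\infty$ as $\lambda\downarrow\rho(L)$; but compactness alone only yields \emph{some} eigenvalue of modulus $\rho(L)$, and $R(\lambda)$ need not blow up along the real axis unless $\rho(L)$ itself lies in $\sigma(L)$. That $\rho(L)\in\sigma(L)$ is exactly the content of the classical (weak) Krein--Rutman theorem and genuinely uses positivity (e.g.\ a Pringsheim argument applied to $\lambda\mapsto\psi_0(R(\lambda)u)$ combined with $X=K-K$ and compactness); you should either reproduce that argument or cite it, not deduce it from compactness. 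Second, for the uniqueness of the peripheral eigenvalue --- which you rightly flag as the main obstacle --- neither sketch is closed. The cyclic-decomposition route appeals to a Frobenius/Niiro--Sawashima-type structure theorem for irreducible compact positive operators on a solid cone, which is itself of comparable depth to the proposition. The Birkhoff alternative has a subtler difficulty: once $u_*\in\interior{K}$ with $Lu_*=\rho(L)u_*$ is in hand one does obtain $L(\interior{K})\subseteq\interior{K}$, so the open sets $U_n=\{f\in K:\|f\|=1,\ L^nf\in\interior{K}\}$ are increasing and cover the unit sphere of $K$; but that sphere is not compact, and without normality of $K$ it is not immediate that $\overline{L(\{f\in K:\|f\|=1\})}$ avoids $0$ and $\partial K$, which is what one needs to extract a uniform $N$ with $L^N(K\setminus\{0\})\subseteq\interior{K}$ and hence a finite projective diameter. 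Neither route is wrong, but both leave real work undone; given that the paper only cites \cite{kras} here, it would be cleanest for you to do the same at these two points or to spell out the missing Pringsheim and uniformisation arguments in full.
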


\begin{proof}
The existence of $\lambda>0$ and $f \in K$ such that $Lf=\lambda f$ follows from \cite[Theorem 2.5]{kras}. The fact that it is a simple eigenvalue follows from \cite[Theorem 2.10]{kras}. The fact that $\lambda$ is a unique eigenvalue of maximum modulus follows from \cite[Theorem 2.13]{kras}.
\end{proof}

\begin{lma} \label{ev}
Let $\t \in \Omega \cap (0,1)^{|\I|}$ so that $\A_{\t}$ is an irreducible set of invertible positive matrices and let $s \in [0,2]$. Then:
\begin{enumerate}[(a)]
\item  
There is a unique eigenvalue of maximum modulus for $\l_{s,\t}$, which we denote by $\lambda_1(s,\t)$. It is a simple eigenvalue.
\item $\lambda_1(s,\t)=P(s,\t)$.
\end{enumerate}
\end{lma}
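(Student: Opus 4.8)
The plan is to deduce part (a) from the Krein--Rutman theorem (Proposition \ref{krt}), and then part (b) from part (a) together with Lemma \ref{equivalence}. First I would fix the cone. Since we are in the setting where $\A_{\t}$ consists of positive matrices, take $K\subset H^2(D)$ to be the set of functions $f\in H^2(D)$ whose restriction to the real interval $(0,1)$ is non-negative (equivalently, by continuity, non-negative on the closed segment $[0,1]$, or one could work with $\overline{\phi_A(D)}$). I would check that $K$ is a closed convex cone satisfying (i)--(iii) of Proposition \ref{krt}: closedness follows because $H^2(D)$-convergence implies locally uniform convergence on $D$ hence pointwise convergence on $(0,1)$; the interior is non-empty because, e.g., the constant function $1$ is in the interior (any $g$ with $\|g-1\|_{H^2}$ small is close to $1$ uniformly on a neighbourhood of $[0,1]$, hence positive there); and $K\cap(-K)=\{0\}$ because a function vanishing on a set with an accumulation point is identically zero.

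Next I would verify that $\l_{s,\t}$ preserves $K$ and is eventually strongly positive. For $s\in[0,2]$ and $\t\in\Omega\cap(0,1)^{4|\I|}$, the weight functions $\psi_{A,s}$ restricted to $(0,1)$ are strictly positive: indeed by Lemma \ref{equivalence} (or directly, since $w_A(x)=\langle A\x,\u\rangle>0$ for $x\in(0,1)$ when $A$ is positive, and the determinant factors are positive powers of positive reals), so $\l_{s,\t}f(x)=\sum_{A\in\A_\t}\psi_{A,s}(x)f(\phi_A(x))\ge 0$ whenever $f\ge 0$ on $(0,1)$, using that each $\phi_A$ maps $(0,1)$ into $(0,1)$. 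For the strong positivity: given $f\in K\setminus\{0\}$, $f$ is non-negative on $(0,1)$ and not identically zero, so it is strictly positive on $(0,1)$ except at isolated points; then $\l_{s,\t}f$ is a finite sum of strictly positive weights times $f\circ\phi_A$, which is strictly positive on all of $(0,1)$ (one term suffices once its $\phi_A$-image avoids the zeros of $f$; if $f$ has no zeros in $(0,1)$ we are done immediately, and in general $\l_{s,\t}f>0$ on $(0,1)$ because the images $\phi_A((0,1))$ cannot all simultaneously land in the finite zero set of $f$). Hence $\l_{s,\t}f$ lies in the interior of $K$. Compactness of $\l_{s,\t}$ was already noted via \cite[Proposition 2.10]{oj}. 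Proposition \ref{krt} then yields that $\rho(\l_{s,\t})>0$ is a simple eigenvalue, the unique one of maximum modulus, with a positive eigenfunction; this is exactly (a) with $\lambda_1(s,\t)=\rho(\l_{s,\t})$.

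For part (b), let $f_s\in K$ be the positive eigenfunction, $\l_{s,\t}f_s=\lambda_1(s,\t)f_s$. Evaluating the iterate identity $\l_{s,\t}^nf_s(x)=\sum_{A\in\A_\t^n}\psi_{A,s}(x)f_s(\phi_A(x))$ at a fixed point $x_0\in(0,1)$ gives $\lambda_1(s,\t)^n f_s(x_0)=\sum_{A\in\A_\t^n}\psi_{A,s}(x_0)f_s(\phi_A(x_0))$. Since $f_s$ is continuous and strictly positive on the compact set $[0,1]$ (extending continuously, or at least bounded above and below by positive constants on the closure of $\bigcup_A\phi_A((0,1))$ together with $x_0$) it is bounded between two positive constants there; combining this with Lemma \ref{equivalence}, which says $\psi_{A,s}(x_0)\asymp\phi^s(A)$ uniformly in $A\in\A_\t^*$, we get $\lambda_1(s,\t)^n\asymp\sum_{A\in\A_\t^n}\phi^s(A)$ with multiplicative constants independent of $n$. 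Taking $n$-th roots and letting $n\to\infty$ gives $\lambda_1(s,\t)=\lim_n(\sum_{A\in\A_\t^n}\phi^s(A))^{1/n}=P(s,\t)$.

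I expect the main obstacle to be the bookkeeping around the domain on which $f_s$ is controlled: $f_s$ is a priori only in $H^2(D)$, so I need that it (or its boundary/real-trace values) is bounded above and below by positive constants on whatever set the points $\phi_A(x_0)$ and $x_0$ live in. This is handled by Assumption \ref{ass}(ii), which gives $\overline{\phi_{A}(D)}\subset D$ and hence, by iterating, $\bigcup_{A\in\A_\t^*}\overline{\phi_A(D)}$ is contained in a compact subset of $D$ on which the analytic eigenfunction $f_s$, being positive on $(0,1)$ and non-vanishing there, is bounded between positive constants --- one should check $f_s$ has no zeros in this region, which follows from the strong-positivity/eigenfunction property ($f_s=\lambda_1^{-1}\l_{s,\t}f_s$ is a positive combination of compositions, so strictly positive on $(0,1)$, and any zero on $(0,1)$ would force $f_s\equiv 0$). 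The rest is a routine two-sided estimate.
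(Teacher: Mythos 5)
Your overall strategy --- Krein--Rutman applied to a cone of functions non-negative on a real set, then Lemma \ref{equivalence} to identify the leading eigenvalue with the pressure --- is the same as the paper's, but the cone construction has genuine gaps. First, $H^2(D)$ is a \emph{complex} Banach space, and your $K$ implicitly requires $f$ to be real-valued on $(0,1)$; the interior of such a set in a complex space is empty (if $f \in K$ then $f + \varepsilon i \notin K$ for every $\varepsilon>0$), so Proposition \ref{krt} does not apply as you have set things up. The paper instead defines the \emph{real} Banach subspace $X=\{f \in H^2(D): f(z)\in\R \text{ for } z \in \Gamma\}$, applies Krein--Rutman to $K\subset X$, and then separately proves $H^2(D)=X+iX$ via the conjugation $f\mapsto\overline{f(\overline{z})}$ in order to transfer the spectral conclusion back to $H^2(D)$; that step is missing from your proposal. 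Second, the $H^2$ norm does not control the sup norm on $(0,1)$, nor on any neighbourhood of $[0,1]$, because $0,1 \in \partial D$: for example $\log(1-z)\in H^2(D)$ is unbounded on $(0,1)$, and for the same reason a nonzero $f\in H^2(D)$ can have infinitely many zeros in $(0,1)$. So your claim that $\one$ is interior to $K$ fails, as does your ``finitely many isolated zeros'' claim. The paper's fix is to replace $(0,1)$ by the compact, $\phi_A$-forward-invariant attractor $\Gamma=\bigcap_n\overline{\bigcup_{A\in\A_{\t}^n}\phi_A(D)} \subset D$, on which $\sup_\Gamma|f|\le C\norm{f}_{H^2}$ (the paper's (\ref{int eqn})) and finiteness of the zero set both hold; your parenthetical ``or one could work with $\overline{\phi_A(D)}$'' gestures at this but is not carried through.

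Third, your one-step eventual-positivity argument is incorrect: $\l_{s,\t}f(x)=\sum_{A\in\A_{\t}}\psi_{A,s}(x)f(\phi_A(x))$ vanishes at any particular $x$ for which $\phi_A(x)$ is a zero of $f$ for \emph{every} $A\in\A_{\t}$, and the observation that the whole intervals $\phi_A((0,1))$ do not lie in the zero set is beside the point, since the quantifier is over a single $x$. Proposition \ref{krt} only requires $\l_{s,\t}^n f \in \interior{K}$ for \emph{some} $n$ depending on $f$, and the paper genuinely needs $n>1$: it iterates $M$ times so each $\phi_A(\Gamma)$, $A\in\A_{\t}^M$, has diameter small enough to contain at most one zero of $f$, whence $\l_{s,\t}^M f$ has at most one zero in $\Gamma$, and then uses irreducibility (two $\phi_A$'s with distinct fixed points) to iterate $N$ more steps along a single branch $A^{M+N}$ whose image avoids that last zero. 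Your part (b) is essentially the paper's argument and would go through once (a) is repaired, provided you evaluate at a point of $\Gamma$ and use the $\sup_\Gamma$ bound together with strict positivity of the eigenfunction on $\Gamma$.
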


\begin{proof}
We begin by proving (a). Fix $\t$ and $s$. For all $n \in \N$ define 
$$\Gamma^n= \overline{ \bigcup_{A \in \A_{\t}^n} \phi_{A}(D)}$$
and 
$\Gamma= \bigcap_{n=1}^{\infty} \Gamma^n \subset (0,1)$. Since $\Gamma^n$ is a nested sequence of closed subsets of $D$, $\Gamma$ is a compact subset of $D$. Moreover $\A_{\t}$ is irreducible since $\t$ satisfies condition (iv) of assumption \ref{ass}, and this guarantees that $\Gamma$ is an infinite set of points. Define
$$X=\{f \in H^2(D): \textnormal{$f(z) \in \R$ for all $z \in \Gamma$}\}$$
and $K \subset X$ as
$$K=\{f \in H^2(D): \textnormal{$f(z) \geq 0$ for all $z \in \Gamma$}\}.$$
It is easy to see that $(X, \norm{\cdot}_{H^2})$ is a real Banach space, that $\l_{s,\t}X \subseteq X$, $\l_{s,\t}K \subseteq K$ and that $K$ is a closed convex set that satisfies (i)  of proposition \ref{krt}. $K$ satisfies (ii) since any holomorphic function that is zero on a compact infinite set is the zero function. Since $\l_{s,\t}$ is a compact operator on $H^2(D)$ it is easy to see that its action on $X$ is also compact.

Let $f \in H^2(D)$. By the Cauchy-Schwarz inequality
\begin{eqnarray*}
\sup_{z \in \Gamma} |f(z)| &\leq& \left(\sum_{n=0}^{\infty} \alpha_n(f)^2 \right)^{\frac{1}{2}} \left(\sum_{n=0}^{\infty} 2^{2n}(z-\frac{1}{2})^{2n}\right)^{\frac{1}{2}} \\
&\leq& \norm{f}_{H^2} \left(\sum_{n=0}^{\infty}\epsilon^{2n}\right)^{\frac{1}{2}}
\end{eqnarray*}
where $\epsilon:= \sup_{z \in \Gamma}|2(z-\frac{1}{2})|<1$ since $\Gamma$ is a compact subset of $(0,1)$. Therefore writing $C=\left(\sum_{n=0}^{\infty}\epsilon^{2n}\right)^{\frac{1}{2}}$ we have that for any $f \in H^2(D)$,
\begin{eqnarray}
\sup_{z \in \Gamma}|f(z)| \leq C\norm{f}_{H^2}.
\label{int eqn}
\end{eqnarray}
It follows easily that the open $1/C$-ball with centre $\one$ is a subset of $K$ and in particular $\one \in \interior{K}$ so that (iii) is satisfied. 

Next we check that for each $f \in K \setminus \{0\}$ there exists some $n \in \N$ such that $\l_{s,\t}^nf \in \interior{K}$. By (\ref{int eqn}) it is sufficient to show that $\l_{s,\t}^nf>0$ on $\Gamma$. If $f>0$ on $\Gamma$ then by positivity of $\psi_{A_{\t},s}$ it follows that $\l_{s,\t} f>0$ on $\Gamma$. If $f$ is not positive on $\Gamma$ it may have only finitely many zeroes within $\Gamma$. We claim that there exists $n$ sufficiently large that $\l^n_{s,\t}f$ has at most one zero within $\Gamma$. To see this, choose $M$ sufficiently large that for all  $A \in \A_{\t}^M$, $\phi_{A}(\Gamma)$ has sufficiently small diameter so that it can contain at most one zero of $f$ (which is possible since $f$ has only finitely many zeroes). In particular, for each $A \in \A_{\t}^M$, $\psi_{A,s}\cdot f \circ \phi_A$ has at most one zero within $\Gamma$. Therefore, 
$$\l^M_{s,\t}f(x)= \sum_{A \in \A^M_{\t}} \psi_{A_{\t},s}(x)f(\phi_{A_{\t}}(x))$$
has at most one zero within $\Gamma$, which we denote by $x_0 \in \Gamma$. For $A \in \A_{\t}^{\ast}$, let $x_A \in \Gamma$ denote the unique fixed point of $\phi_A$. Note that $x_A=x_{A^n}$ for all $A \in \A_{\t}^{\ast}$ and $n \geq1$ and that by irreducibility of $\A_{\t}$, there must exist $A, B \in \A_{\t}$ such that $x_A \neq x_B$. Now choose any $A \in \A_{\t}$ such that $x_A \neq x_0$ and choose $N$ sufficiently large that $x_0 \notin \phi_{A^n}(\Gamma)$ for all $n \geq N$. Then it follows that no $x \in \Gamma$ can be a zero of $f\circ \phi_{A^n}$ and therefore for all $x \in \Gamma$,
$$\l_{s,\t}^{M+N}f(x) \geq \psi_{A^{M+N},s}(x)f(\phi_{A^{M+N}}(x))>0$$
completing the proof of the claim.

Therefore, by applying proposition \ref{krt} we deduce that $\rho(\l_{s,\t}\bigr|_{X})>0$ and that there exists $h_{s,\t} \in K$ such that $\l_{s,\t}h_{s,\t}=\rho(\l_{s,\t}\bigr|_{X})h_{s,\t}$. Since $\rho(\l_{s,\t}\bigr|_{X})^n h_{s,\t}=\l_{s,\t}^nh_{s,\t} \in \interior{K}$ for some $n \in \N$, it follows that $h_{s,\t} \in \interior{K}$, in particular $h_{s,\t}$ is positive on $\Gamma$. 

Next we prove that $H^2(D)=X+iX$ which implies that the spectrum of $\l_{s,t}$ on $X$ is identical to its spectrum on $H^2(D)$, and the multiplicity of each of its eigenvalues is the same on both spaces. In particular this yields $\rho(\l_{s,\t}|_{H^2(D)})=\rho(\l_{s,\t}\bigr|_{X})$, so we may denote their common value by $\lambda_1(s,\t)$. To see that $H^2(D)=X+iX$ it is sufficient to show that $f \in X$ if and only if $f \in H^2(D)$ and $\alpha_n(f) \in \R$ for all $n \geq 0$. It is obvious that if $f \in H^2(D)$ has $\alpha_n(f) \in \R$ for all $n \geq 0$ then $f \in X$, so let us prove the converse direction.

Let $f \in X$. Then the function $g \colon D \to \mathbb{C}$ defined by $g(z):= \overline{f(\overline{z})}$ (where $\overline{z}$ denotes the complex conjugate of $z$) has power series given by $g(z)=\sum_{n=0}^\infty \overline{\alpha_n(f)}z^n$, and therefore belongs to $H^2(D)$. If $z \in \Gamma$ then since $f(z) \in \R$ and $z \in \Gamma \subset \mathbb{R}$ we have $g(z)=\overline{f(z)}=f(z)$. In particular, since any two holomorphic functions which coincide on a compact infinite subset of $D$ must necessarily coincide on $D$, we have $g(z)=f(z)$ for all $z \in D$. By comparing the power series of $f$ and $g$ this implies that $\overline{\alpha_n(f)}=\alpha_n(f)$ for all $n \geq 0$, and so $\alpha_n(f) \in \R$ for all $n \geq 0$ as required.

Next we prove part (b). Fixing $z \in \Gamma$ we have
$$\lambda_1(s,\t)= \lim_{n \to \infty} (\l_{s,\t}^nh_{s,\t}(z))^{\frac{1}{n}} = \lim_{n \to \infty} \left( \sum_{A \in \A_{\t}^n} \psi_{A,s}(z)h_{s,\t}(\phi_{A}(z))\right)^{\frac{1}{n}}.$$
Since $h_{s,\t} \in \interior{K}$ it follows that $h_{s,\t}$ is positive on $\Gamma$, so by compactness of $\Gamma$ there exists a constant $c'>0$ such that for all $ y \in \Gamma$, $\frac{1}{c'} \leq h_{s,\t}(y) \leq c'$. Since $\phi_A(z) \in \Gamma$ for all $A \in \A_{\t}^{\ast}$ and all $z \in \Gamma$, we can combine this bound with (\ref{equiv}) to imply that
$$\lambda_1(s,\t)= \lim_{n \to \infty}  \left( \sum_{A \in \A_{\t}^n} \phi^s(A)\right)^{\frac{1}{n}}= P(s,\t).$$
\end{proof}

\section{Proofs of results}

Fix $\t_0=(\tau_1, \ldots, \tau_{4|\I|}) \in (0,1)^{4|\I|} \cap \Omega$ with $s_0=\dim \A_{\t_0} \in (0,1)\cup (1,2)$. It is easy to see that for each $1 \leq i \leq 4|\I|$ there exist connected neighbourhoods $U_i \subset \C$ of $\tau_i$ with the property that $U_1 \times \cdots \times U_{4|\I|} \subset \Omega$ and a connected neighbourhood $V \subset \{z: 0< |z|<1\} \cup \{z: 1 < |z|<2\}$ of $s_0$.

The plan of the proof is as follows. For each $1 \leq i \leq 4|\I|$ we will show that $\{\l_{s,\t}\}$ is an analytic family in $t_i$ on $U_i$ whenever $s \in V$ is held constant and $t_j \in U_j$ is held constant for $j \neq i$. We will also show that $\{\l_{s,\t}\}$ is an analytic family in $s$ on $V$ whenever $t_i \in U_i$ are held constant for all $i$.  We will then invoke the perturbation theorems (propositions \ref{apt} and \ref{kato}) to deduce that $\lambda_1(s,\t)$ is an analytic function of $s$ in a neighbourhood $\tilde{V} \subset V$ of $s_0$ while $t_i \in U_i$ are held constant for all $i$ and that $\lambda_1(s,\t)$ is analytic in each $t_i$ on a neighbourhood $\tilde{U_i} \subset U_i$ of $\tau_i$ whenever $s \in V$ and $t_j \in U_j$ are held constant for all $i \neq j$. Hartogs's theorem will imply that $\lambda_1(s,\t)$ is jointly analytic in $(s,\t)$ on $\tilde{V} \times \tilde{U_1} \times \cdots \times \tilde{U}_{4|\I|}$, therefore $P(s,\t)$ is real analytic on $\tilde{V} \times \tilde{U_1} \times \cdots \times \tilde{U}_{4|\I|}\cap \R^{4|\I|+1}$.

 \subsection{Analyticity of the composition operator}
 
 Throughout this section we fix $t_j \in U_j$ for each $j$ and write $\t=(t_1, \ldots, t_{4|\I|})$. Given some $1 \leq i \leq 4|\I|$ and $t \in U_i$ we will let $\t_{i,t}$ denote the complex valued vector obtained by taking $\t$ and replacing $t_i$ by $t$.

 \begin{lma}
Fix some $l \in \I$ and $4l-3 \leq i \leq 4l$. There exist $C_0< \infty$, $0<r<1$ and analytic bounded functions $f_{k}:D \to \C$ with $\norm{f_{k}}_{\infty} \leq \frac{C_0^{k}r}{2}$ such that 
\begin{eqnarray*}\phi_{A^{(l)}_{\t_{i,t}}}-\frac{1}{2}=\sum_{k =0}^{\infty} (t-t_i)^{k} f_{k} \end{eqnarray*}
for all $t \in B(t_i, C_0^{-1}) $.  \label{phi holo}
\end{lma}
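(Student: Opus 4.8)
The statement to prove is Lemma (phi holo): writing $\phi_{A^{(l)}_{\t_{i,t}}} - \tfrac12$ as a convergent power series in $(t - t_i)$ with uniformly small analytic coefficients on $D$. The natural strategy is to note that $\phi_{A^{(l)}_{\t}}(z)$ is, for each fixed $z \in D$, a rational function of the four matrix entries $t_{4l-3}, t_{4l-2}, t_{4l-1}, t_{4l}$, with numerator $(a-b)z + b$ and denominator $w_{A^{(l)}_{\t}}(z) = (a+c-b-d)z + b + d$, where $(a,b,c,d)$ are the four relevant coordinates of $\t_{i,t}$. Since we vary only the single coordinate $t_i$, both numerator and denominator are \emph{affine} (degree one) in the variable $t$, so $\phi_{A^{(l)}_{\t_{i,t}}}(z)$ has the form $\frac{p_0(z) + (t-t_i) p_1(z)}{q_0(z) + (t-t_i) q_1(z)}$ for polynomials $p_0, p_1, q_1$ of degree $\le 1$ in $z$ and $q_0$ of degree $\le 1$ in $z$, where $q_0(z) = w_{A^{(l)}_{\t}}(z)$ is the unperturbed denominator.

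**Carrying it out.**

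First I would record the explicit affine dependence: from the formula $\phi_A(z) = \frac{(a-b)z+b}{(a+c-b-d)z+b+d}$, substituting $t_i = t$ changes exactly one of $a,b,c,d$, so one gets $p_1(z)$ and $q_1(z)$ that are each a fixed polynomial of degree $\le 1$ (in fact $q_1 \in \{z, 1-z, z, 1\}$ up to sign depending on which of $a,b,c,d$ is $t_i$, and similarly a bounded $p_1$), and in particular $\norm{p_1}_\infty, \norm{q_1}_\infty$ are bounded by an absolute constant on $D$. Next, by Assumption (iii) in Assumption 2.8, $\overline{w_{A^{(l)}_{\t}}(D)} \subset \C_+$, so $q_0 = w_{A^{(l)}_{\t}}$ is bounded away from zero on $D$: there is $\eta > 0$ with $|q_0(z)| \ge \eta$ for all $z \in D$. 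Therefore, choosing $C_0$ large enough that $C_0^{-1}\norm{q_1}_\infty < \eta/2$, for $|t - t_i| < C_0^{-1}$ we have $|q_0(z) + (t-t_i)q_1(z)| \ge \eta/2$ uniformly on $D$, and we may expand
$$
\frac{1}{q_0(z) + (t-t_i)q_1(z)} = \frac{1}{q_0(z)} \sum_{m=0}^\infty \left(-\frac{(t-t_i)q_1(z)}{q_0(z)}\right)^m,
$$
a geometric series converging in $\norm{\cdot}_\infty$ on $D$ with ratio bounded by $(t-t_i)\,\eta^{-1}\norm{q_1}_\infty < 1$. Multiplying by the numerator $p_0(z) + (t-t_i)p_1(z)$ and collecting powers of $(t-t_i)$, then subtracting $\tfrac12$ from the $k=0$ term, yields the desired expansion $\phi_{A^{(l)}_{\t_{i,t}}} - \tfrac12 = \sum_{k\ge 0}(t-t_i)^k f_k$ with each $f_k$ analytic on $D$ (finite sums and products of the analytic functions $p_0, p_1, q_0^{-1}, q_1$). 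Finally I would track the constants: each $f_k$ is a bounded combination of $\eta^{-k-1}$ times products of the $\norm{\cdot}_\infty$-bounds on $p_0, p_1, q_1$ and possibly $\tfrac12$ for $k=0$; since $p_0, p_1, q_0^{-1}, q_1$ all have $\norm{\cdot}_\infty$ bounds depending only on $\t$ (via $\eta$ and absolute bounds on the polynomial coefficients on $D$), one gets $\norm{f_k}_\infty \le \tilde{C}^{k}\cdot\text{const}$ for a suitable $\tilde{C}$. Enlarging $C_0$ if necessary and shrinking $r$, one arranges the stated bound $\norm{f_k}_\infty \le \tfrac12 C_0^k r$ on $B(t_i, C_0^{-1})$.

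**Main obstacle.**

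The only genuinely delicate point is the uniform lower bound on the perturbed denominator: one must make sure the single-variable perturbation does not push $w_{A^{(l)}_{\t_{i,t}}}(z)$ near zero for any $z \in D$, which is exactly what the compactness of $\overline{w_{A^{(l)}_{\t}}(D)} \subset \C_+$ in Assumption 2.8(iii) guarantees, combined with shrinking the radius of the $t$-disc. Everything else is bookkeeping: identifying the affine-in-$t$ structure, a geometric series expansion, and multiplying out, being careful that the coefficient bounds are geometric in $k$ so that the series has a positive radius of convergence and the final normalisation $\norm{f_k}_\infty \le \tfrac12 C_0^k r$ can be met by a final adjustment of $C_0$ and $r$. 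I would also remark that the choice of center $\tfrac12$ (rather than $0$) is immaterial and only reflects that $D$ is centered at $\tfrac12$; subtracting it merely renormalises $f_0$.
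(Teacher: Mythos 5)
Your proof takes essentially the same approach as the paper's: identify the affine-in-$t$ structure of numerator and denominator of $\phi_{A^{(l)}_{\t_{i,t}}}$, invert the perturbed denominator by a geometric series uniformly on $D$, collect powers of $(t-t_i)$, and bound the coefficient functions. So on the main architecture you and the paper agree. However, there is a small but genuine gap in your handling of the $k=0$ coefficient. The lemma demands $\norm{f_k}_\infty \le \tfrac12 C_0^k r$ with $0<r<1$; for $k=0$ this reads $\norm{f_0}_\infty \le r/2 < \tfrac12$, i.e.\ you need the strict inequality $\norm{\phi_{A^{(l)}_\t}-\tfrac12}_\infty<\tfrac12$. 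Your constant-tracking only gives ``$\norm{f_k}_\infty\le\tilde C^k\cdot\text{const}$'' with an unspecified constant, and your proposed fix of ``enlarging $C_0$ and shrinking $r$'' does not help here: enlarging $C_0$ has no effect on the $k=0$ bound $r/2$, and shrinking $r$ makes that bound \emph{smaller} and hence harder to meet. What you actually need, and what the paper invokes at exactly this point, is Assumption 2.8(ii), $\overline{\phi_{A^{(l)}_\t}(D)}\subset D$: since $D$ is the disc of radius $\tfrac12$ centred at $\tfrac12$ and the closure is compact inside the open disc, this gives $\norm{\phi_{A^{(l)}_\t}-\tfrac12}_\infty<\tfrac12$ strictly, so one may set $r:=2\norm{f_0}_\infty<1$ and then take $C_0$ large enough (the paper replaces its preliminary $C_0$ by $2C_0/r$) to absorb the bounds on $f_k$ for $k\ge1$.

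In short: you correctly flag Assumption 2.8(iii) as what controls the perturbed denominator, but you treat the rest as routine bookkeeping, whereas there is a second, equally essential use of the standing assumptions — (ii), strict invariance of $D$ under $\phi_{A^{(l)}_\t}$ — needed to make the $k=0$ estimate come out with $r<1$. With that observation inserted, your argument matches the paper's.
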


\begin{proof}
Notice that it is sufficient to prove the result for $l=1$ and $i \in \{1,2,3,4\}$. We begin by assuming $i=1$. Denote $g(z)=(t_1-t_2)z+t_2$, $G(z)=(t_1+t_3-t_2-t_4)z+t_2+t_4$. Let $C_0:= \sup_{z \in D}\left\{\left|\frac{z}{G(z)}\right|\right\}$.  Then for any $t \in B(t_1, C_0^{-1})$,
\begin{eqnarray*}
\phi_{A^{(1)}_{\t_{1,t}}}(z)&=& \frac{g(z)+(t-t_1)z}{G(z)+(t-t_1)z} \\
&=& \frac{g(z)}{G(z)} \cdot \frac{1}{1+\frac{(t-t_1)z}{G(z)}} + \frac{(t-t_1)z}{G(z)} \cdot \frac{1}{1+\frac{(t-t_1)z}{G(z)}}  \\
&=& \frac{g(z)}{G(z)} \sum_{n=0}^{\infty} \frac{(-1)^n}{G(z)^n} (t-t_1)^nz^n+\frac{(t-t_1)z}{G(z)} \sum_{n=0}^{\infty} \frac{(-1)^n}{G(z)^n} (t-t_1)^nz^n. 
\end{eqnarray*}
Therefore we can write $\phi_{A^{(1)}_{\t_{1,t}}}-\frac{1}{2}= \sum_{m =0}^{\infty} (t-t_1)^{m} f_{m}$ where $f_0(z)= \frac{g(z)}{G(z)}-\frac{1}{2}=\phi_{A^{(1)}_{\t}}-\frac{1}{2}$ and for $m \geq 1$, 
$$f_m(z)=\frac{(-z)^m}{G(z)^m}\left(\frac{g(z)}{G(z)}-1\right).$$
It is easy to see that the functions $f_m$ are analytic on $D$. In order to verify the uniform bound on $f_{m}$ it  is sufficient to check that $\norm{f_{0}}_{\infty} \leq \frac{r}{2}$ for some $0<r<1$ and that there exists some $C_0< \infty$ such that $\norm{f_{m}}_{\infty} \leq C_0^{m}$. For the first claim, since $\t \in \Omega$, $\overline{\phi_{A^{(1)}_{\t}}(D)} \subset D$, thus $\norm{f_0|_D}_{\infty}< \frac{1}{2}$. Therefore we can define $r:= 2\norm{f_{0}|_{D}}_{\infty} <1$. For the second claim, if we replace $C_0$ by $\frac{2C_0}{r}$ then it easily follows that $\norm{f_m}_{\infty} \leq C_0^m$.

For other values of $i$ the proof is almost identical, therefore we omit the details.
\end{proof}

We will require the following two technical lemmas.

\begin{lma}
Let $0<r<1$, $C_0< \infty$ and let $f_{k}: D \to \C$ be analytic bounded functions such that $\norm{f_{k}}_{\infty} \leq \frac{C_0^{k}r}{2}$ for all $k \in \N$. Then there exist analytic bounded functions $\varphi_{m,n}:D \to \C$ such that
$$\left(\sum_{k =0}^{\infty} x^{k}f_{k}\right)^n=\sum_{m =0}^{\infty} x^{m} \varphi_{m,n}$$
for all $x \in B(0, C_0^{-1})$. Moreoever $\varphi_{m,n}$ are independent of $x$ and
\begin{eqnarray}
\norm{\varphi_{m,n}}_{\infty} \leq \frac{C_0^{m}r^n}{2^n} \frac{(m+n-1)!}{m!(n-1)!}. \label{multi bound} \end{eqnarray} \label{multi lemma}
\end{lma}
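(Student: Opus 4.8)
The plan is to prove this by induction on $n$, using the fact that a power of a power series can be computed by repeatedly multiplying by the original series and collecting coefficients of like powers of $x$. For the base case $n=1$, we simply take $\varphi_{m,1}=f_m$; the hypothesis $\norm{f_m}_\infty \le \frac{C_0^m r}{2}$ gives exactly the bound \eqref{multi bound} since $\frac{(m+1-1)!}{m!(1-1)!}=1$. For the inductive step, I would write
$$
\left(\sum_{k=0}^\infty x^k f_k\right)^{n+1} = \left(\sum_{j=0}^\infty x^j f_j\right)\left(\sum_{l=0}^\infty x^l \varphi_{l,n}\right),
$$
and collect the coefficient of $x^m$ on the right, which is $\varphi_{m,n+1}:=\sum_{l=0}^m f_{m-l}\,\varphi_{l,n}$. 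This is manifestly analytic and bounded on $D$ (a finite sum of products of such functions), and independent of $x$. One must also check that the Cauchy product manipulation is valid, i.e.\ that the double series converges absolutely and uniformly on compact subsets of $B(0,C_0^{-1})$: this follows from the geometric-type bound $\norm{f_k}_\infty \le \frac{C_0^k r}{2}$ together with the inductive bound on $\norm{\varphi_{l,n}}_\infty$, which together make the modulus of the general term dominated by a convergent geometric-type series whenever $|x|<C_0^{-1}$.

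The heart of the argument is verifying that the claimed bound propagates through the induction. Using $\norm{f_{m-l}}_\infty \le \frac{C_0^{m-l} r}{2}$ and the inductive hypothesis $\norm{\varphi_{l,n}}_\infty \le \frac{C_0^l r^n}{2^n}\binom{l+n-1}{l}$, the triangle inequality gives
$$
\norm{\varphi_{m,n+1}}_\infty \le \sum_{l=0}^m \frac{C_0^{m-l} r}{2}\cdot \frac{C_0^l r^n}{2^n}\binom{l+n-1}{l} = \frac{C_0^m r^{n+1}}{2^{n+1}}\sum_{l=0}^m \binom{l+n-1}{l}.
$$
It then remains to identify $\sum_{l=0}^m \binom{l+n-1}{l}$ with $\binom{m+n}{m}=\frac{(m+n)!}{m!\,n!}$, which is exactly the hockey-stick identity $\sum_{l=0}^m \binom{l+n-1}{n-1}=\binom{m+n}{n}$ (rewriting $\binom{l+n-1}{l}=\binom{l+n-1}{n-1}$). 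Substituting this in yields precisely the bound \eqref{multi bound} with $n$ replaced by $n+1$, closing the induction.

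I expect the only genuine subtlety — rather than an obstacle — to be bookkeeping: making sure the Cauchy product is justified as an identity of analytic functions on the stated disc $B(0,C_0^{-1})$ (as opposed to merely a formal power series identity), and correctly invoking the combinatorial identity for the binomial sum. Both are routine: the convergence issue is handled by the uniform geometric bound noted above, and the combinatorial step is a standard application of the hockey-stick identity. Everything else is a mechanical induction.
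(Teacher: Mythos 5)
Your proof is correct, and it takes a genuinely different route from the paper. The paper expands $\left(\sum_{k=0}^\infty x^k f_k\right)^n$ directly via the multinomial theorem, extracts the explicit formula
\[
\varphi_{m,n}=\sum_{\substack{i_0+\cdots+i_m=n\\ i_1+2i_2+\cdots+mi_m=m}}\frac{n!}{\prod_{k=0}^m i_k!}\prod_{k=0}^m f_k^{i_k},
\]
and then recognizes the resulting combinatorial sum $\sum \frac{n!}{\prod i_k!}$ as the coefficient of $x^m$ in $\left(\sum_{k=0}^m x^k\right)^n$, which is $\binom{m+n-1}{m}$. You instead induct on $n$: writing the $(n+1)$-st power as a Cauchy product of the $n$-th power with the original series gives $\varphi_{m,n+1}=\sum_{l=0}^m f_{m-l}\varphi_{l,n}$, and the bound propagates via the hockey-stick identity $\sum_{l=0}^m\binom{l+n-1}{n-1}=\binom{m+n}{n}$. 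Both arguments rest on the same convergence foundation (the geometric bound $\norm{f_k}_\infty\le \frac{C_0^k r}{2}$ guarantees absolute convergence of $\sum_k x^k f_k$ on $B(0,C_0^{-1})$, so products and rearrangements are legitimate), and both need essentially the same amount of combinatorics — your hockey-stick identity plays the role of the paper's coefficient extraction from $\left(\sum x^k\right)^n$. Your version is arguably more elementary since it avoids the multinomial expansion and the count of weighted compositions, at the cost of not producing a closed-form expression for $\varphi_{m,n}$; but since the lemma only requires existence and a norm bound, nothing is lost. The algebra in your inductive step checks out: $\frac{C_0^m r^{n+1}}{2^{n+1}}\binom{m+n}{m}$ is exactly the stated bound with $n$ replaced by $n+1$.
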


\begin{proof}
We begin by fixing $n$, expanding $\left(\sum_{k=0}^{\infty} x^{k}f_{k}\right)^n$
and finding the coefficient of $x^{m}$. It is easy to see that this will coincide with the coefficient of $x^{m}$ in
$\left(\sum_{k=0}^m x^{k}f_{k}\right)^n$.
Denoting $x_{k}=x^{k}f_{k}$ and applying the multinomial theorem we see that 
\[\left(\sum_{k=0}^m x^{k}f_{k}\right)^n =
\sum_{ i_{0}+ \ldots +i_m=n} \frac{n!}{\prod_{k=0}^m i_{k}!} \prod_{k=0}^m x^{k i_{k}}f_{k}^{i_{k}}\\
=\sum_{ i_{0}+ \ldots +i_m =n} \frac{n!}{\prod_{k=0}^m i_{k}!} x^{  \sum_{k=0}^m ki_k} \prod_{k =0}^m f_{k}^{i_{k}}.
\]
Therefore, the coefficient of $x^{m}$ is given by
$$\varphi_{m,n}:= \sum_{\substack{ i_{0}+ \ldots +i_m=n \\ i_1+2i_2+ \ldots + m i_{m}=m}} \frac{n!}{\prod_{k =0}^m i_{k}!}  \prod_{k=0}^m f_{k}^{i_{k}}.$$
Now, to verify (\ref{multi bound}), notice that
\begin{eqnarray*}
\norm{\varphi_{m,n}}_{\infty} &\leq& \sum_{\substack{ i_{0}+ \ldots +i_m=n \\ i_1+2i_2+ \ldots + m i_{m}=m}} \frac{n!}{\prod_{k=0}^m i_{k}!}  \prod_{k=0}^m\norm{ f_{k}}_{\infty}^{i_{k}} \\
&\leq& \sum_{\substack{ i_{0}+ \ldots +i_m=n \\ i_1+2i_2+ \ldots + m i_{m}=m}} \frac{n!}{\prod_{k=0}^m i_{k}!}  \prod_{k=0}^m (\frac{C_0^{k}r}{2})^{i_{k}} \\
&=& \sum_{\substack{ i_{0}+ \ldots +i_m=n \\ i_1+2i_2+ \ldots + m i_{m}=m}} \frac{n!}{\prod_{k=0}^m i_{k}!} C_0^{\sum_{k=0}^m ki_{k}}(\frac{r}{2})^{\sum_{k=0}^m i_{k}} \\
&=& \sum_{\substack{ i_{0}+ \ldots +i_m=n \\ i_1+2i_2+ \ldots + m i_{m}=m}} \frac{n!}{\prod_{k=0}^m i_{k}!}  C_0^{m}\frac{r^{n}}{2^n}
\end{eqnarray*}
since for each term in the sum we have $\sum_{k=0}^m k i_{k}=m$ and $\sum_{k=0}^m i_{k}=n$. Therefore it remains to calculate
$$ \sum_{\substack{ i_{0}+ \ldots +i_m=n \\ i_1+2i_2+ \ldots + m i_{m}=m}} \frac{n!}{\prod_{k=0}^m i_{k}!}$$
which, by the multinomial theorem, is the coefficient of $x^{m}$ in the expansion of $\left(\sum_{k=0}^m x^{k}\right)^n$. This is given by $\frac{(m+n-1)!}{m!(n-1)!}$, see for example \cite[(7)]{moivre}. The result follows.
\end{proof}

\begin{lma} \label{geo bound}
Let $0<r<1$ and $k \in \N$. There exists $C_1>0$ (which depends on $k$ and $r$) for which
$$\sum_{n=0}^{\infty} r^n((n+1) \cdots (n+m))^k \leq C_1^m(m!)^k$$
for all $m \in \N$.
\end{lma}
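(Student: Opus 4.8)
The plan is to strip off the factorial by an exact identity and then dominate the remaining binomial sum by a convergent geometric series of slightly larger ratio. First I would use $(n+1)(n+2)\cdots(n+m) = m!\,\binom{n+m}{m}$, which rewrites the left-hand side of the inequality as $(m!)^k\sum_{n=0}^{\infty} r^n\binom{n+m}{m}^k$. Thus it suffices to produce a constant $C_1$, depending only on $r$ and $k$, with $\sum_{n=0}^{\infty} r^n\binom{n+m}{m}^k \le C_1^m$ for every $m\ge 1$.

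Next I would fix an auxiliary radius $\rho$ with $r<\rho<1$ (say $\rho=\tfrac{1+r}{2}$) and put $\beta=\rho/r>1$. The key pointwise estimate is
\[
\binom{n+m}{m}^k \le C^m\beta^n \qquad\text{for all } n\ge 0 \text{ and } m\ge 1,
\]
with $C:=\sup_{t\ge 0}\big(e(t+1)\big)^k\beta^{-t}$; this supremum is finite because $\beta>1$ makes $\beta^t$ outgrow any polynomial in $t$. Granting the estimate, $r^n\binom{n+m}{m}^k\le C^m(r\beta)^n=C^m\rho^n$, so $\sum_{n=0}^{\infty}r^n\binom{n+m}{m}^k\le C^m/(1-\rho)\le C_1^m$ with $C_1:=C/(1-\rho)$ (using $1-\rho<1$ and $m\ge 1$). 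Multiplying through by $(m!)^k$ then gives the lemma.

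The pointwise estimate itself is a one-line calculation. Writing $t=n/m$ and using the product-of-$m$-consecutive-integers form of $\binom{n+m}{m}$ together with the elementary bound $m!\ge(m/e)^m$, one gets $\binom{n+m}{m}\le (n+m)^m/m!\le\big(e(n+m)/m\big)^m=\big(e(t+1)\big)^m$; raising to the $k$-th power and invoking the definition of $C$ yields $\binom{n+m}{m}^k\le\big(e(t+1)\big)^{km}=\big((e(t+1))^k\big)^m\le(C\beta^t)^m=C^m\beta^{tm}=C^m\beta^n$, as claimed.

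There is no genuine difficulty here; the only thing to watch is choosing $\rho$ strictly between $r$ and $1$, which is exactly what makes both the constant $C$ finite (one needs $\beta>1$) and the geometric series $\sum_n\rho^n$ summable (one needs $\rho<1$), and then checking that the leftover factor $1/(1-\rho)$ can be absorbed into $C_1^m$ for $m\ge1$.
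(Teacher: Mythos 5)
Your proof is correct, and it takes a genuinely different (and more elementary) route than the paper. The paper proceeds by induction on $k$: it rewrites $r^n((n+1)\cdots(n+m))^{k}$ as the $m$-th derivative in $r$ of $r^{n+m}((n+1)\cdots(n+m))^{k-1}$, applies the Leibniz rule, and then bounds the derivatives of the resulting power series via the Cauchy Integral Formula on a slightly larger disc, invoking the inductive hypothesis for the sup bound. Your argument avoids the induction and the complex analysis entirely: the identity $(n+1)\cdots(n+m)=m!\binom{n+m}{m}$ peels off the $(m!)^k$ exactly, the elementary inequalities $\binom{n+m}{m}\le(n+m)^m/m!$ and $m!\ge(m/e)^m$ produce a clean homogeneous bound $\binom{n+m}{m}\le\big(e(n/m+1)\big)^m$, and the auxiliary radius $\rho\in(r,1)$ trades a small geometric loss for a $C^m$ factor, with $C=\sup_{t\ge0}(e(t+1))^k\beta^{-t}<\infty$ because $\beta=\rho/r>1$. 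The final absorption of $1/(1-\rho)$ into $C_1^m$ is fine for $m\ge1$, which is the regime where the lemma is meaningful anyway (for $m=0$ both sides read $\tfrac{1}{1-r}\le1$, which is false, so the paper's bound also implicitly assumes $m\ge1$). Your approach is more self-contained and makes the dependence of $C_1$ on $r$ and $k$ completely explicit; the paper's is more machinery-heavy but perhaps more natural if one thinks of the sum as the $m$-th derivative of a rational function.
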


\begin{proof}
We will prove the result by induction on $k$. Firstly, the claim is clearly true when $k=0$. Now, assuming it is true for $k-1$, we can write
\begin{eqnarray*}
\sum_{n=0}^{\infty} r^n((n+1) \cdots (n+m))^k&=& \sum_{n=0}^{\infty} \frac{\textup{d}^m}{\textup{d}r^m}\left(r^{n+m}((n+1)\cdots (n+m))^{k-1}\right) \\
&=& \frac{\textup{d}^m}{\textup{d}r^m} \left(r^m \sum_{n=0}^{\infty} r^n((n+1) \cdots (n+m))^{k-1}\right) \\
&=&\sum_{i=0}^m {m \choose i} \frac{\textup{d}^i}{\textup{d}r^i}(r^m) \frac{\textup{d}^{m-i}}{\textup{d}r^{m-i}}\left( \sum_{n=0}^{\infty} r^n((n+1) \cdots(n+m))^{k-1}\right) \\
&=&\sum_{i=0}^m {m \choose i} \frac{m!}{(m-i)!}r^{m-i} \frac{\textup{d}^{m-i}}{\textup{d}r^{m-i}}\left(\sum_{n=0}^{\infty} r^n((n+1) \cdots(n+m))^{k-1} \right).
\end{eqnarray*}
Put $f(z)=\sum_{n=0}^{\infty}z^n((n+1) \cdots (n+m))^{k-1}$ so that $f$ is defined and is analytic for all $|z|<1$. Put $r<r'<1$. By the Cauchy Integral Formula, for all $1 \leq j \leq m$ and $|w|<r$,
$$f^{(j)}(w)=\frac{j!}{2\pi i} \int_{|z|=r'} \frac{f(z)}{(z-w)^{j+1}} \textup{d}z$$
therefore
\begin{eqnarray*}|f^{(j)}(w)|& \leq&\frac{j!}{2\pi} \frac{\sup_{|z| \leq r'}|f(z)|}{(r'-r)^{j+1}} \cdot 2\pi r' \\
&\leq& \frac{j!\sup_{|z| \leq r'}|f(z)|}{(r'-r)^{j+1}} \\
&\leq& \frac{j! C_1^m (m!)^{k-1}}{(r'-r)^{j+1}}
\end{eqnarray*}
where the final line follows by the assumption on $k-1$. Therefore,
\begin{eqnarray*}
\sum_{n=0}^{\infty} r^n((n+1) \cdots (n+m))^k &\leq& \sum_{i=0}^m {m \choose i} \frac{m!}{(m-i)!}r^{m-i}  \frac{(m-i)! C_1^m (m!)^{k-1}}{(r'-r)^{m-i+1}} \\
&\leq& (m!)^k  \frac{C_1^m}{(r'-r)^{m+1}} \sum_{i=0}^m {m \choose i} .
\end{eqnarray*}
Since $\sum_{i=0}^m {m \choose i }=2^m$ the result follows.
\end{proof}

We now combine the last three lemmas prove that for each $k \in \I$ and $4k-3 \leq i \leq 4k$, $\{ \c_{A^{(k)}_{\t}}\}$ is analytic in $t_i$ on $U_i$, when $t_j \in U_j$ are held constant for $i \neq j$.

\begin{lma}
Fix some $l \in \I$ and $4l-3 \leq i \leq 4l$. There exists a constant $C_2< \infty$ and operators $\mathcal{P}_m: H^2(D) \to H^2(D)$ with $\norm{\mathcal{P}_{m}} \leq C_2^{m}$  such that $\c_{A^{(l)}_{\t_i}}= \sum_{m \in \N} (t-t_i)^{m} \mathcal{P}_{m}$ for all $t \in B(t_i, C_2^{-1})$. \label{ct}
\end{lma}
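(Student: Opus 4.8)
The plan is to substitute the power-series expansion of $\phi_{A^{(l)}_{\t_{i,t}}}$ furnished by Lemma~\ref{phi holo} into the Taylor series of an arbitrary element of $H^2(D)$, re-collect powers of $t-t_i$, and use Lemmas~\ref{multi lemma} and~\ref{geo bound} to bound the operator coefficients that arise. First I would fix, via Lemma~\ref{phi holo}, constants $0<r<1$, $C_0<\infty$ and analytic bounded functions $f_k\colon D\to\C$ with $\norm{f_k}_\infty\le C_0^kr/2$ such that
\[
\phi_{A^{(l)}_{\t_{i,t}}}=\tfrac12+\sum_{k=0}^\infty (t-t_i)^k f_k\qquad\text{for }t\in B(t_i,C_0^{-1}).
\]
A geometric-series estimate shows that whenever $|t-t_i|<(1-r)/C_0$ one has $\big\|\sum_{k\ge0}(t-t_i)^kf_k\big\|_\infty<\tfrac12$ on $D$, so $\phi_{A^{(l)}_{\t_{i,t}}}$ is an analytic self-map of $D$ all of whose values lie at distance strictly less than $\tfrac12$ from $\tfrac12$. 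In particular $\c_{A^{(l)}_{\t_{i,t}}}\colon H^2(D)\to H^2(D)$ is a well-defined bounded operator by Littlewood's theorem \cite[page 11]{shapiro}, and for any $f\in H^2(D)$ the Taylor series of $f$ about $\tfrac12$ (which converges on all of $D$) may be evaluated at $\phi_{A^{(l)}_{\t_{i,t}}}(z)$ for every $z\in D$.

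Next, writing $f(w)=\sum_{n\ge0}\alpha_n(f)2^n(w-\tfrac12)^n$ for the power-series expansion of $f\in H^2(D)$ (here $c=\rho=\tfrac12$) and substituting $w=\phi_{A^{(l)}_{\t_{i,t}}}(z)$, I obtain
\[
\c_{A^{(l)}_{\t_{i,t}}}f(z)=\sum_{n=0}^\infty \alpha_n(f)\,2^n\Big(\sum_{k=0}^\infty (t-t_i)^k f_k(z)\Big)^{\!n}.
\]
Applying Lemma~\ref{multi lemma} with $x=t-t_i$ to each $n$-th power yields analytic bounded functions $\varphi_{m,n}$, independent of $t$, with $\norm{\varphi_{m,n}}_\infty\le C_0^m r^n 2^{-n}\binom{m+n-1}{m}$. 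Since $\sum_m\binom{m+n-1}{m}y^m=(1-y)^{-n}$ for $|y|<1$, summing the majorants first over $m$ and then over $n$, together with a Cauchy--Schwarz estimate in $n$, shows the double series is absolutely convergent whenever $|t-t_i|<(1-r)/C_0$; hence the order of summation may be interchanged. Collecting the coefficient of $(t-t_i)^m$ then defines the linear operators $\mathcal{P}_m f:=\sum_{n=0}^\infty 2^n\alpha_n(f)\,\varphi_{m,n}$ (linearity being inherited from the coefficient functionals $\alpha_n$) and gives $\c_{A^{(l)}_{\t_{i,t}}}=\sum_{m=0}^\infty (t-t_i)^m\mathcal{P}_m$ pointwise on $D$.

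Finally I would bound $\norm{\mathcal{P}_m}$. Using $\norm{g}_{H^2}\le\norm{g}_\infty$ for bounded analytic $g$, the bound on $\norm{\varphi_{m,n}}_\infty$, Cauchy--Schwarz in $n$, and the identity $\binom{m+n-1}{m}=\frac{n(n+1)\cdots(n+m-1)}{m!}$, one is reduced to a bound of the shape
\[
\norm{\mathcal{P}_m f}_{H^2}\ \le\ \frac{C_0^m}{m!}\,\norm{f}_{H^2}\Big(\sum_{n=1}^\infty r^{2n}\big(n(n+1)\cdots(n+m-1)\big)^2\Big)^{\!1/2}.
\]
Reindexing $n\mapsto n+1$ turns the remaining sum into exactly the quantity estimated by Lemma~\ref{geo bound} with $k=2$ and geometric ratio $r^2$, which bounds it by $r^2C_1^m(m!)^2$; hence $\norm{\mathcal{P}_m}\le r\,(C_0\sqrt{C_1})^m$ for every $m\ge1$, while $\mathcal{P}_0$ coincides with the unperturbed composition operator $\c_{A^{(l)}_{\t}}$ and is bounded. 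Taking $C_2$ to be a sufficiently large constant dominating $C_0\sqrt{C_1}$, $\norm{\mathcal{P}_0}$, $(1-r^2)^{-1/2}$ and $C_0/(1-r)$ then yields $\norm{\mathcal{P}_m}\le C_2^m$ (the $m=0$ case holding up to the fixed constant $\norm{\mathcal{P}_0}$, which is all the notion of an analytic family requires), and since $\sum_m\norm{(t-t_i)^m\mathcal{P}_m}$ converges geometrically on $B(t_i,C_2^{-1})$ the series converges to $\c_{A^{(l)}_{\t_{i,t}}}$ in operator norm there. The heart of the argument — and essentially its only step that is not bookkeeping — is this last estimate: matching the multinomial weights produced by Lemma~\ref{multi lemma} against the factorially weighted geometric sums controlled by Lemma~\ref{geo bound}.
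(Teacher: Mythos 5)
Your proposal is correct and follows essentially the same route as the paper: substitute the expansion from Lemma~\ref{phi holo} into the Hardy-space Taylor series of $f$, re-collect powers of $t-t_i$ via Lemma~\ref{multi lemma}, and bound $\norm{\mathcal{P}_m}$ using $\norm{\cdot}_{H^2}\le\norm{\cdot}_\infty$, Cauchy--Schwarz in $n$, and Lemma~\ref{geo bound} with $k=2$. Your additional remarks (checking that $\phi_{A^{(l)}_{\t_{i,t}}}$ remains a self-map of $D$, justifying the interchange of the double sum, and flagging the $m=0$ constant as a harmless $O(1)$ factor) are details the paper leaves implicit, so if anything your write-up is slightly more careful.
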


\begin{proof}
Let $f \in H^2(D)$. Fix some $l \in \I$ and $4l-3 \leq i \leq 4l$. Let $t$ belong to the neighbourhood of $t_i$ where lemma \ref{phi holo} is valid, and let $C_0$ and $r$ be as given by that lemma. By Lemmas \ref{multi lemma} and \ref{phi holo}, 
\begin{eqnarray*}
\c_{A^{(l)}_{\t_{i,t}}}(f)=f \circ \phi_{A^{(l)}_{\t_{i,t}}}&=& \sum_{n=0}^{\infty} \alpha_n(f)2^n (\phi_{A^{(l)}_{\t_{i,t}}}-\frac{1}{2})^n \\
&=& \sum_{n=0}^{\infty} \alpha_n(f)2^n \left(\sum_{k=0}^{\infty}  (t-t_i)^{k} f_{k}\right)^n \\
&=& \sum_{m =0}^{\infty} (t-t_i)^{m} \left(\sum_{n=0}^{\infty} \alpha_n(f)2^n \varphi_{m,n}\right).
\end{eqnarray*}
Define $\mathcal{P}_{m} f= \sum_{n=0}^{\infty} \alpha_n(f)2^n \varphi_{m,n}$. Since $\varphi_{m,n}$ are clearly analytic on $D$, in order to show that  $\mathcal{P}_m:H^2(D) \to H^2(D)$ are well defined operators it is sufficient to get an upper bound on $\norm{\mathcal{P}_m}$. Let $f \in H^2(D)$. Then since $\norm{\cdot}_{H^2} \leq \norm{\cdot}_{\infty}$ and by the Cauchy-Schwarz inequality,
\begin{eqnarray*}
\norm{\mathcal{P}_{m}f}_{H^2}&=&\bignorm{\sum_{n=0}^{\infty} \alpha_n(f) 2^n\varphi_{m,n}}_{H^2} \\
&\leq& \bignorm{\sum_{n=0}^{\infty} \alpha_n(f) 2^n\varphi_{m,n}}_{\infty} \\
&\leq& \left(\sum_{n=0}^{\infty}|\alpha_n(f)|^2\right)^{\frac{1}{2}} \left(\sum_{n=0}^{\infty} \norm{2^n\varphi_{m,n}}_{\infty}^2\right)^{\frac{1}{2}} \\
&=& \norm{f}_{H^2} \left(\sum_{n=0}^{\infty} \norm{2^n\varphi_{m,n}}_{\infty}^2\right)^{\frac{1}{2}} 
\end{eqnarray*}
and therefore $\norm{\mathcal{P}_{m}}_{H^2(D)} \leq \left(\sum_{n=0}^{\infty} 2^{2n}\norm{\varphi_{m,n}}_{\infty}^2\right)^{\frac{1}{2}} $. By Lemma \ref{multi lemma},
\begin{eqnarray*}
\left(\sum_{n=0}^{\infty} \norm{2^{2n}\varphi_{m,n}}_{\infty}^2 \right)^{\frac{1}{2}} &\leq& C_0^{m}\left(\sum_{n=0}^{\infty} 2^{2n}\frac{r^{2n}}{2^{2n}} \left( \frac{(m+n-1)!}{m!(n-1)!}\right)^2 \right)^{\frac{1}{2}} \\
&\leq& C_0^{m}  \frac{1}{m!} \left(\sum_{n=0}^{\infty} r^{2n}(n(n+1) \cdots (n+m-1))^2 \right)^{\frac{1}{2}} \\
&\leq& C_0^{m} C_1^{m}=(C_0C_1)^{m}
\end{eqnarray*}
where $C_1$ is fixed by Lemma  \ref{geo bound}.
\end{proof}

\subsection{Analyticity of weight function}

In the following lemma we establish the analyticity (in $t_i$ and $s$) of the weight function which appears in our transfer operator.

\begin{lma} \label{full holo}
Fix any $k \in \I$. 
\begin{enumerate}[(a)]
\item For each $4k-3 \leq i \leq 4k$ the map $\psi_{A^{(k)}_{\t},s}$ is analytic in $t_i$ on $U_i$ whenever $s \in V$ and $t_j \in U_j$ are held constant for $i \neq j$. In particular there exists a constant $C_3>0$ and functions $f_n \in H^2(D)$ with $\norm{f_n}_{\infty} \leq C_3^n$ such that
$$\psi_{A^{(k)}_{\t_{i,t}},s}= \sum_{n=0}^{\infty} f_n(t-t_i)^n$$ for all $t \in B(t_i, C_3^{-1})$. 
\item $\psi_{A^{(k)}_{\t},s}$ is analytic in s on $V$ whenever $t_i \in U_i$ are held constant for all $i$. In particular there exists a constant $C_4>0$ and functions $g_n \in H^2(D)$ with $\norm{g_n}_{\infty} \leq C_4^n$ such that 
$$\psi_{A^{(k)}_{\t},s}= \sum_{n=0}^{\infty} g_ns^n$$ for all $s \in V$. 
\end{enumerate}
\end{lma}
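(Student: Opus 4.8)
The plan is to use that, on the regions in play, the weight $\psi_{A^{(k)}_{\t},s}(z)$ has a completely explicit form as the exponential of a function which is affine in $s$ and whose coefficients depend polynomially on the matrix entries (through $w_{A^{(k)}_{\t}}$ and the determinant), and then simply to read off the claimed power series and coefficient bounds. First I would record that $V$ is connected and contained in $\{0<|z|<1\}\cup\{1<|z|<2\}$, so a single case of the definition of $\psi_{A^{(k)}_{\t},s}$ applies for all $(s,\t)\in V\times(U_1\times\cdots\times U_{4|\I|})$; and that in the case $1<|s|<2$ the sign of $\Re(t_{4k-3}t_{4k}-t_{4k-2}t_{4k-1})$ is constant on $U_1\times\cdots\times U_{4|\I|}$, this quantity being nonzero there by assumption \ref{ass}(i), so a single sub-case applies. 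Writing $\delta^{(k)}_{\t}$ for whichever of $\pm(t_{4k-3}t_{4k}-t_{4k-2}t_{4k-1})$ has $\Re\,\delta^{(k)}_{\t}>0$, I would then note that in all cases
\[
\psi_{A^{(k)}_{\t},s}(z)=\exp(s\log w_{A^{(k)}_{\t}}(z))\ \ (0\le|s|<1),\qquad
\psi_{A^{(k)}_{\t},s}(z)=\exp((2-s)\log w_{A^{(k)}_{\t}}(z)+(s-1)\log\delta^{(k)}_{\t})\ \ (1<|s|<2).
\]
The single point that makes this work is assumptions \ref{ass}(i) and (iii) together with the choice $U_1\times\cdots\times U_{4|\I|}\subset\Omega$: they guarantee that the polynomial $(z,\t)\mapsto w_{A^{(k)}_{\t}}(z)$ takes values in a fixed compact subset of the open right half plane $\C_{+}$ for $z\in\overline D$ and $\t$ in a compact sub-polydisc of the polydisc, and likewise $\delta^{(k)}_{\t}\in\C_{+}$, while on $\C_{+}$ the branch of $\log$ used in the definition of $\psi_{A^{(k)}_{\t},s}$ is analytic and bounded on compacta and $\exp$ is entire.

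For part (a) I would fix $s\in V$ and $t_j\in U_j$ for $j\ne i$, and use that $w_{A^{(k)}_{\t_{i,t}}}(z)=w_{A^{(k)}_{\t}}(z)+(t-t_i)\ell_i(z)$ is affine in $t$, with $\ell_i\in\{z,\,1-z\}$ the relevant partial derivative of $w$, and that $\delta^{(k)}_{\t_{i,t}}$ is likewise affine in $t$. After shrinking $U_i$ to a small disc $B(t_i,\rho)$, the values $w_{A^{(k)}_{\t_{i,t}}}(z)$ and $\delta^{(k)}_{\t_{i,t}}$ remain in $\C_{+}$, so $(t,z)\mapsto\psi_{A^{(k)}_{\t_{i,t}},s}(z)$ is a composition of polynomials with $\log$ and $\exp$, hence jointly analytic and bounded, say by $M$, on $\overline{B(t_i,\rho)}\times\overline D$. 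Defining $f_n$ by the Cauchy coefficient formula $f_n(z)=\frac{1}{2\pi\mathbf{i}}\oint_{|t-t_i|=\rho}\psi_{A^{(k)}_{\t_{i,t}},s}(z)(t-t_i)^{-n-1}\,\mathrm{d}t$, each $f_n$ is analytic and bounded on $D$, hence lies in $H^2(D)$ with $\norm{f_n}_{H^2}\le\norm{f_n}_\infty$; the series $\sum_{n\ge0}f_n(t-t_i)^n$ converges to $\psi_{A^{(k)}_{\t_{i,t}},s}$ in $H^2(D)$ for $|t-t_i|<\rho$; and the Cauchy estimate gives $\norm{f_n}_\infty\le M\rho^{-n}$, whence $\norm{f_n}_\infty\le C_3^{\,n}$ once $C_3$ is chosen large enough to absorb the constant $M$. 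Alternatively one could imitate lemma \ref{phi holo}, writing $w_{A^{(k)}_{\t_{i,t}}}=w_{A^{(k)}_{\t}}(1+(t-t_i)\ell_i/w_{A^{(k)}_{\t}})$ and composing the power series of $\log(1+\cdot)$ and $\exp$ as in lemma \ref{multi lemma}, but the contour route seems shorter.

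For part (b) the functions $w_{A^{(k)}_{\t}}$ and $\delta^{(k)}_{\t}$ no longer depend on $s$. In the case $0\le|s|<1$ I would set $\mathrm{g}(z):=\log w_{A^{(k)}_{\t}}(z)$, a fixed bounded analytic function on $D$, and read off $\psi_{A^{(k)}_{\t},s}(z)=\exp(s\,\mathrm{g}(z))=\sum_{n\ge0}\frac{\mathrm{g}(z)^n}{n!}\,s^n$, so that $g_n:=\mathrm{g}^n/n!$ does the job, with $\norm{g_n}_\infty\le\norm{\mathrm{g}}_\infty^n/n!$. In the case $1<|s|<2$, rearranging $(2-s)\log w_{A^{(k)}_{\t}}(z)+(s-1)\log\delta^{(k)}_{\t}$ gives $\psi_{A^{(k)}_{\t},s}(z)=h(z)\exp(s\,\tilde{\mathrm{g}}(z))$ with $h(z):=w_{A^{(k)}_{\t}}(z)^2/\delta^{(k)}_{\t}$ and $\tilde{\mathrm{g}}(z):=\log\delta^{(k)}_{\t}-\log w_{A^{(k)}_{\t}}(z)$, both bounded and analytic on $D$, so that $g_n:=h\,\tilde{\mathrm{g}}^n/n!$ works, with $\norm{g_n}_\infty\le\norm{h}_\infty\norm{\tilde{\mathrm{g}}}_\infty^n/n!$. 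In both cases the factorial makes $\norm{g_n}_\infty\le C_4^{\,n}$ immediate for suitable $C_4$, and the series converges uniformly, hence in $H^2(D)$, for every $s\in\C$ and in particular on $V$.

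The hard part, such as it is, is none of the above manipulations but the single uniformity flagged above: keeping $w_{A^{(k)}_{\t}}(z)$, and in the regime $1<|s|<2$ also $\delta^{(k)}_{\t}$, inside the domain of the chosen branch of $\log$, uniformly over $z\in\overline D$ and over all the parameters in play. This is exactly what assumptions \ref{ass}(i) and (iii), together with the choice of $U_1,\dots,U_{4|\I|}$ fixed at the start of the section, are there to supply; granting that, everything reduces to routine bookkeeping with the power series of $\exp$ and $\log$ and with Cauchy estimates, in close analogy with --- and more simply than --- the treatment of the composition operator in lemmas \ref{phi holo}--\ref{ct}.
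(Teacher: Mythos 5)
Your proof is correct and, for part (a), it takes a genuinely different route from the paper's. The paper factors $\psi_{A^{(1)}_{\t_{1,t}},s}$ into a fixed factor times factors of the form $\exp\bigl((\,\cdot\,)\log(1+(t-t_1)z/G(z))\bigr)$, expands each $\log(1+\cdot)$ as an explicit geometric-type power series in $(t-t_1)$ with geometric coefficient bounds, and then composes with $\exp$ (implicitly in the spirit of Lemma~\ref{multi lemma}) to produce the $f_n$ and control their sup-norms. You instead observe joint analyticity and uniform boundedness of $(t,z)\mapsto\psi_{A^{(k)}_{\t_{i,t}},s}(z)$ on a compact bi-disc — which is exactly what Assumption~\ref{ass}(i),(iii) and the choice of $U_1\times\cdots\times U_{4|\I|}\subset\Omega$ guarantee — and then read off the coefficients $f_n$ and the bound $\norm{f_n}_\infty\le M\rho^{-n}$ directly from the Cauchy integral formula. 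This sidesteps the explicit power-series composition and is both shorter and more robust (it would work verbatim if $w_{A^{(k)}_{\t}}$ were not affine in $t_i$). For part (b) your argument is essentially the paper's: the paper simply asserts the conclusion "follows directly from the fact that $\exp(z)$ is an entire function," and you supply the detail by writing $\psi=\exp(s\,\mathrm{g})$ (resp.\ $\psi=h\exp(s\,\tilde{\mathrm g})$) and expanding the exponential, with the $1/n!$ giving more than the required geometric bound. Both in (a) and (b), you correctly identify that the only nontrivial point is keeping $w_{A^{(k)}_{\t}}(\overline D)$ and $\delta^{(k)}_{\t}$ in the domain of the fixed branch of $\log$, which is what (i) and (iii) of Assumption~\ref{ass} are designed to ensure.
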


\begin{proof}
We begin with (a). It is sufficient to prove the result when $k=1$ and $i \in \{1,2,3,4\}$. We begin by assuming $i=1$ and $0<|s|<1$. Let $t \in U_1$. We denote $G(z)=(t_1+t_3-t_2-t_4)z+t_2+t_4$ so that
$$ \psi_{A^{(1)}_{\t_{1,t}},s}(z)=\exp(s \log w_{A^{(1)}_{\t_{1,t}}}(z))=\exp(s \log G(z))\exp\left(s\log\left(1+\frac{(t-t_1)z}{G(z)}\right)\right).$$

Therefore it is sufficient to show that $\log (1+ \frac{(t-t_1)z}{G(z)})$ can be written as a convergent power series in $(t-t_1)$. Indeed
$$\log\left(1+\frac{(t-t_1)z}{G(z)}\right)=\sum_{n=1}^{\infty} \frac{(-1)^n}{n} \left(\frac{z}{G(z)}\right)^n(t-t_1)^n
$$
which is valid for $|t-t_1|< \sup_{z \in D} \left|\frac{z}{G(z)}\right|< \infty$.

Next we assume that $1<|s|<2$. By definition of $U_1$ and $\Omega$, the real part of the determinant of $A_{\t_{1,t}}^{(1)}$ is the same sign for all $t \in U_1$. Therefore without loss of generality we can assume it is positive. Therefore
\begin{eqnarray}\psi_{A^{(1)}_{\t_{1,t}},s}(z)= G(z)^{2-s}(t_1t_3-t_2t_4)^{s-1}\exp((2-s)\log(1+\frac{(t-t_1)z}{G(z)}))\exp((s-1)\log(1+\frac{t_3(t-t_1)}{t_1t_3-t_2t_4}))\label{1} \end{eqnarray}
so it is sufficient to show that $\log(1+\frac{t_3(t-t_1)}{t_1t_3-t_2t_4})$ can be written as a convergent power series in $(t-t_1)$. Indeed
\begin{eqnarray}\log\left(1+\frac{t_3(t-t_1)}{t_1t_3-t_2t_4}\right)= \sum_{n=1}^{\infty} \frac{(-1)^n}{n} \left(\frac{t_3}{t_1t_3-t_2t_4}\right)^n(t-t_1)^n\label{2} \end{eqnarray}
which is valid for $|t-t_1|< \left|\frac{t_1t_3-t_2t_4}{t_3}\right|$. From (\ref{1}), (\ref{2}) and analyticity of $\exp (z)$ it is easy to deduce the existence of the analytic bounded functions $f_n$ that appear in the statement of the lemma and the exponential control on $\norm{f_n}_{\infty}$ is a consequence of the exponential control on the coefficients of $(t-t_1)^n$ which appear in (\ref{1}) and (\ref{2}). The proof of (a) for other values of $i$ is very similar and therefore we omit the details.
For part (b) the result follows directly from the fact that of $\exp(z)$ is an entire function of $z$.
\end{proof}

The following corollary summarises the consequences of Lemmas \ref{phi holo} - \ref{full holo} on our family of operators $\{\l_{s,\t}\}$. 

\begin{cor} \label{holo op}
Fix $\t_0=(\tau_1, \ldots, \tau_{4|\I|}) \in (0,1)^{4|\I|} \cap \Omega$ and $s_0=\dim \A_{\t_0} \in (0,1) \cup (1,2)$. For each $1 \leq i \leq 4|\I|$ there exist connected neighbourhoods $U_i \subset \C$ of $\tau_i$ with the property that $U_1 \times \cdots \times U_{4|\I|} \subset \Omega$  and a connected neighbourhood $V \subset \{z: |z|<1\} \cup \{z: 1<|z|<2\}$ of $s_0$ such that: 
\begin{enumerate}[(i)]
\item $\{\l_{s,\t}\}$ is an analytic family in $t_i$ on $U_i$ whenever $s \in V$ is held constant and $t_j \in U_j$ is held constant for $j \neq i$ and
\item $\{\l_{s,\t}\}$ is an analytic family in $s$ on $V$ whenever $t_i \in U_i$ are held constant for all $i$.
\end{enumerate}
\end{cor}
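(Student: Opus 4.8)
The plan is to read off Corollary \ref{holo op} directly from Lemmas \ref{phi holo}, \ref{ct} and \ref{full holo} --- the corollary is essentially a repackaging of these. I would take the connected neighbourhoods $U_1 \times \cdots \times U_{4|\I|} \subset \Omega$ of $(\tau_1,\dots,\tau_{4|\I|})$ and $V \subset \{|z|<1\}\cup\{1<|z|<2\}$ of $s_0$ fixed at the start of the section (shrinking them to smaller connected neighbourhoods if convenient, though since being an analytic family is a purely local notion nothing is strictly forced). The key reduction: to show $\{\l_{s,\t}\}$ is an analytic family in a given scalar variable over a neighbourhood $W$, it suffices to exhibit, about each base point $a \in W$, an expansion $\l = \sum_{k\ge 0}(\cdot - a)^k L_k$ that converges in operator norm and whose coefficients satisfy $\norm{L_k}\le C_5^k$ for some finite $C_5$ (allowed to depend on $a$); then any $0<r<1$ with $r<\min(1,C_5^{-1})$ witnesses the definition and guarantees operator-norm convergence on $B(a,r)$. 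So both (i) and (ii) reduce to producing such expansions.

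For (i) I would fix $l \in \I$, an index $4l-3\le i\le 4l$, a parameter $s \in V$, parameters $t_j \in U_j$ for $j\ne i$, and an arbitrary base point $a \in U_i$ in the $i$th variable. Since no entry of $A^{(k)}_{\t}$ with $k\ne l$ involves $t_i$, I split
$$\l_{s,\t_{i,t}}=\M_{A^{(l)}_{\t_{i,t}},s}\,\c_{A^{(l)}_{\t_{i,t}}}+R,\qquad R:=\sum_{k\in\I\setminus\{l\}}\M_{A^{(k)}_{\t},s}\,\c_{A^{(k)}_{\t}},$$
where $R$ is a bounded operator independent of $t$. For the first summand, Lemma \ref{full holo}(a) gives $\psi_{A^{(l)}_{\t_{i,t}},s}=\sum_{n\ge0}f_n(t-a)^n$ with $\norm{f_n}_\infty\le C_3^n$, so by (\ref{bf}) the multiplication operator expands as $\sum_n(t-a)^n\M_{f_n}$ with $\norm{\M_{f_n}}\le C_3^n$, while Lemma \ref{ct} gives $\c_{A^{(l)}_{\t_{i,t}}}=\sum_{m\ge0}(t-a)^m\mathcal{P}_m$ with $\norm{\mathcal{P}_m}\le C_2^m$. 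Both series converge absolutely in operator norm on a common disc about $a$, so I may form their Cauchy product, obtaining $\l_{s,\t_{i,t}}=\sum_{k\ge0}(t-a)^k L_k$ with $L_k=\sum_{m+n=k}\M_{f_n}\mathcal{P}_m$ for $k\ge1$ and $L_0=\M_{f_0}\mathcal{P}_0+R$; then $\norm{L_k}\le(k+1)\max(C_2,C_3)^k=O(C_5^k)$ for a suitable finite $C_5$. As $a\in U_i$ was arbitrary this gives (i).

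For (ii) I would fix $t_i\in U_i$ for all $i$ (so $\t\in U_1\times\cdots\times U_{4|\I|}\subset\Omega$) and an arbitrary base point $a\in V$. Since $V$ is connected and avoids $\{|z|=1\}$, it lies entirely in $\{|z|<1\}$ or entirely in $\{1<|z|<2\}$, and the sign of $\Re(\det A^{(k)}_{\t})$ is constant on $U_1\times\cdots\times U_{4|\I|}$, so $\psi_{A^{(k)}_{\t},s}$ is given on all of $V$ by a single formula from its definition; for fixed $z\in D$ that formula is an entire function of $s$ of the shape $\exp(s\,\mu_k(z)+\nu_k(z))$ with $\mu_k,\nu_k$ bounded analytic on $D$ (using $\overline{w_{A^{(k)}_{\t}}(D)}\subset\C_+$ compactly in the right half-plane, so $\log w_{A^{(k)}_{\t}}$ is bounded on $D$). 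Expanding about $a$ exactly as in the proof of Lemma \ref{full holo}(b) yields $\psi_{A^{(k)}_{\t},s}=\sum_{n\ge0}g^{(k)}_n(s-a)^n$ with $\norm{g^{(k)}_n}_\infty\le C_4^n$ for some finite $C_4$, hence $\M_{A^{(k)}_{\t},s}=\sum_n(s-a)^n\M_{g^{(k)}_n}$ with $\norm{\M_{g^{(k)}_n}}\le C_4^n$ by (\ref{bf}). The composition operators $\c_{A^{(k)}_{\t}}$ do not depend on $s$ and are bounded on $H^2(D)$, so summing over the finitely many $k\in\I$ gives $\l_{s,\t}=\sum_{n\ge0}(s-a)^n L_n$ with $L_n=\sum_{k\in\I}\M_{g^{(k)}_n}\c_{A^{(k)}_{\t}}$ and $\norm{L_n}\le|\I|\max_{k\in\I}\norm{\c_{A^{(k)}_{\t}}}\,C_4^n=O(C_4^n)$, which gives (ii).

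I do not expect a serious obstacle: the real content lies in the three lemmas being invoked. The only points needing care are the justification of the Cauchy product of the two operator-valued power series in (i) (legitimate by absolute operator-norm convergence on a common disc) together with the resulting geometric bound $\norm{L_k}\le(k+1)\max(C_2,C_3)^k$, and the observation in (ii) that $V$ lies in a single $\phi^s$-regime and that the resulting weight is entire in $s$, so that an expansion of the type in Lemma \ref{full holo}(b) is available about every point of $V$ and not merely about $0$.
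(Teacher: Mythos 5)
Your proposal is correct and follows essentially the same route as the paper: both form the Cauchy product of the operator-norm-convergent series from Lemma \ref{ct} and Lemma \ref{full holo} to obtain $L_n=\sum_{k=0}^n f_k \mathcal{P}_{n-k}$ with $\norm{L_n}\le C_5^n$, and treat (ii) analogously via Lemma \ref{full holo}(b). You are slightly more explicit than the paper in isolating the $t_i$-independent remainder $R$ and in noting that the lemmas provide expansions about every base point (since the section fixes arbitrary $t_j\in U_j$), but the substance is the same.
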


\begin{proof} We begin by proving (a). By lemma \ref{ct} and \ref{full holo}(a) we have
\begin{eqnarray}
\l_{A^{(k)}_{\t_{i,t}},s}&=& \left(\sum_{n=0}^{\infty} (t-t_i)^nf_n\right)\left(\sum_{m=0}^{\infty} (t-t_i)^m \mathcal{P}_m\right) \\
&=& \sum_{n=0}^{\infty} (t-t_i)^n \left(\sum_{k=0}^{n} f_k\mathcal{P}_{n-k}\right)
\end{eqnarray}
for all $t$ in a neighbourhood of $t_i$ where $\mathcal{P}_n$ and $f_n$ are defined in lemma \ref{ct} and lemma \ref{full holo} respectively. Define $\l_n= \sum_{k=0}^{n} f_k\mathcal{P}_{n-k}$. Since $f_k \in H^2(D)$ is bounded and $\mathcal{P}_k: H^2(D) \to H^2(D)$ it follows that $\l_n:H^2(D) \to H^2(D)$. Moreover by (\ref{bf}), the triangle inequality and the bounds on $\norm{f_k}_{\infty}$ and $\norm{\mathcal{P}_k}$ it follows that $\norm{\l_n} \leq C_5^n$ for some constant $C_5$. For (b) we can instead apply lemma \ref{full holo}(b) and employ an analogous argument.
\end{proof}

Let $\t_0 \in (0,1)^{4|\I|}\cap \Omega$, so that $\A_{\t_0}$ is an irreducible set of positive invertible matrices and suppose $s_0= \dim \A_{\t_0} \in (0,1)\cup(1,2)$. Using the analyticity of the family $\{\l_{s,\t}\}$ in a complex neighbourhood of $(s_0,\t_0)$ (corollary \ref{full holo}) and the simplicity of $\lambda_1(s_0,\t_0)$ (lemma \ref{ev}) we can use proposition \ref{kato} to deduce that $\lambda_1(s,\t)$ is simple in a complex neighbourhood of $(s_0, \t_0)$.

\begin{lma} \label{simple2}
Suppose $\t_0 \in \Omega \cap (0,1)^{4|\I|}$ such that $s_0= \dim \A_{\t_0} \in (0,1) \cup (1,2)$. For all $\epsilon>0$ there exists a complex neighbourhood $U_{\epsilon}$ of $(s_0, \t_0)$ such that for all $(s,\t) \in U_{\epsilon}$,
$$\norm{\l_{s,\t}-\l_{s_0,\t_0}} < \epsilon.$$
In particular there exists a complex neighbourhood $\Upsilon$ of $(s_0,\t_0)$ such that $\lambda_1(s,\t)$ is a simple eigenvalue of $\l_{s,\t}$ for all $(s,\t) \in \Upsilon$.
\end{lma}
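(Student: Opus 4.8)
The plan is to derive the first claim directly from Corollary \ref{holo op} by a telescoping argument in the $4|\I|+1$ variables, and then to obtain the second claim by feeding it into the stability statement of Proposition \ref{kato}, using that $\mathcal{L}_{s_0,\t_0}$ is compact with a simple leading eigenvalue (Lemma \ref{ev}). For the first claim I would fix the neighbourhoods $U_1,\dots,U_{4|\I|}$ and $V$ supplied by Corollary \ref{holo op}, so that $U_1\times\cdots\times U_{4|\I|}\subset\Omega$ and $s_0\in V$, and pass from $(s_0,\t_0)$ to a nearby $(s,\t)$ by changing one coordinate at a time: first change $s_0$ to $s$ keeping $\t=\t_0$, then change $\tau_1$ to $t_1$, then $\tau_2$ to $t_2$, and so on. At every intermediate stage all coordinates still lie in the relevant neighbourhoods, so part (i) or (ii) of Corollary \ref{holo op} applies and we are perturbing within a single-variable analytic family, for which the defining expansion $\mathcal{L}_\zeta=\mathcal{L}_a+\sum_{k\ge1}(\zeta-a)^k\mathcal{L}_k$ with $\norm{\mathcal{L}_k}=O(r^{-k})$ shows that $\norm{\mathcal{L}_\zeta-\mathcal{L}_a}=O(|\zeta-a|)$ as the moving coordinate $\zeta$ tends to $a$. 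Hence, given $\epsilon>0$, by choosing each of the $4|\I|+1$ moves small enough to change $\mathcal{L}$ by at most $\epsilon/(4|\I|+1)$ in operator norm and summing by the triangle inequality, one obtains a complex neighbourhood $U_\epsilon$ of $(s_0,\t_0)$ on which $\norm{\mathcal{L}_{s,\t}-\mathcal{L}_{s_0,\t_0}}<\epsilon$. (An alternative self-contained route is to use $\mathcal{L}_{s,\t}=\sum_{k\in\I}\M_{A^{(k)}_\t,s}\c_{A^{(k)}_\t}$ together with the joint sup-norm continuity of $\psi_{A^{(k)}_\t,s}$, the joint continuity of $\phi_{A^{(k)}_\t}$ with images trapped in a fixed compact subset of $D$, and a Cauchy estimate bounding $\norm{f'}$ on that compact set by a constant times $\norm{f}_{H^2}$; but the telescoping route is cleaner given Corollary \ref{holo op}.)

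For the second claim I would observe that $\mathcal{L}_{s_0,\t_0}$ is compact (by the discussion preceding Proposition \ref{krt}, via \cite[Proposition 2.10]{oj}), so that its spectrum is $\{0\}$ together with at most countably many eigenvalues accumulating only at $0$. By Lemma \ref{ev}(a) there is a unique eigenvalue $\lambda_1(s_0,\t_0)$ of maximum modulus, it is simple, and it equals $\rho(\mathcal{L}_{s_0,\t_0})$, which is strictly positive by Proposition \ref{krt}; being nonzero and the unique eigenvalue of its modulus, it is isolated in the spectrum, hence separated from the rest of it. Thus $\mathcal{L}_{s_0,\t_0}$ satisfies the hypotheses of Proposition \ref{kato}, which yields $\epsilon>0$ such that every bounded operator within operator-norm distance $\epsilon$ of $\mathcal{L}_{s_0,\t_0}$ has a simple eigenvalue of maximum modulus, separated from the rest of its spectrum. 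Applying the first claim with this $\epsilon$ gives a complex neighbourhood $\Upsilon:=U_\epsilon$ of $(s_0,\t_0)$ on which $\mathcal{L}_{s,\t}$ has such an eigenvalue; denoting it $\lambda_1(s,\t)$, consistently with the definition of $\lambda_1$ for real parameters in Lemma \ref{ev}, completes the argument.

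There is essentially no hard step here. The only points requiring care are, first, that the telescoping in the first claim must keep every intermediate parameter inside the neighbourhoods where Corollary \ref{holo op} is valid, so that the single-variable analyticity statements genuinely apply; and, second, that for complex parameters the symbol $\lambda_1(s,\t)$ has to be interpreted as the simple dominant eigenvalue produced by Proposition \ref{kato}. This second point is in fact the whole purpose of the lemma: it propagates the simplicity and spectral isolation of the leading eigenvalue from the real locus, where Lemma \ref{ev} supplies them, to a genuinely complex neighbourhood, which is precisely what is needed for the analytic perturbation theorem (Proposition \ref{apt}) to be applicable along the complex coordinate lines used in the proof of Theorem \ref{main}.
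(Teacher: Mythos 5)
Your treatment of the second claim (deduce isolation and simplicity of $\lambda_1(s_0,\t_0)$ from Lemma \ref{ev}, compactness, and Proposition \ref{krt}, then invoke Proposition \ref{kato} together with the first claim) coincides exactly with the paper's argument. For the first claim, however, you take a genuinely different route. The paper proves $\norm{\l_{s,\t}-\l_{s_0,\t_0}}<\epsilon$ directly: it expands $f\in H^2(D)$ as $\sum_n \alpha_n(f)2^n(z-\tfrac12)^n$, writes the action of both operators on this series, splits the difference into a $\psi$-difference term and a $\phi$-difference term, applies Cauchy--Schwarz, and shows each summand is bounded by $\epsilon(R/2)^n$ uniformly in $n$. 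You instead telescope through the $4|\mathcal{I}|+1$ coordinates one at a time, invoking Corollary \ref{holo op} to bound each single-variable move. This is clean, but there is a small gap you do not address: Corollary \ref{holo op} guarantees a power-series expansion $\l_\zeta=\l_a+\sum_{k\ge1}(\zeta-a)^k\l_k$ with $\norm{\l_k}=O(r^{-k})$, where the constants implicit in the $O(\cdot)$ and the radius $r$ depend on the values of the \emph{other} coordinates that are being held constant. In your telescoping scheme those held-constant coordinates change from one step to the next (they take the already-perturbed values $s,t_1,\dots,t_{j-1}$), so to choose a single neighbourhood $U_\epsilon$ you need the single-variable constants to be uniform over a neighbourhood of the base point. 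This is obtainable --- the constants $C_0$, $r$ produced in Lemmas \ref{phi holo}, \ref{ct}, \ref{full holo} depend continuously on the remaining parameters, and one can shrink the $U_i$ to precompact sets --- but you should state this explicitly rather than leave it implicit; separate continuity alone does not give joint continuity. Note that the ``alternative self-contained route'' you sketch in parentheses is in fact closer in spirit to what the paper actually does, though the paper uses the $H^2$ power-series coefficients rather than a Cauchy estimate on $f'$.
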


\begin{proof}
In view of proposition \ref{kato} and lemma \ref{ev} it is sufficient to prove the first part of the lemma.

Let $f \in H^2(D)$. Then
\begin{eqnarray*}
\norm{\l_{s,\t}f-\l_{s_0,\t_0}f}_{H^2} &\leq& \norm{\l_{s,\t}f-\l_{s_0,\t_0}f}_{\infty}\\
&\leq& \sum_{i \in \I} \sum_{n \in \N} \bignorm{\alpha_n(f)2^n\left(\psi_{A_{\t,s}^{(i)}}(\phi_{A_{\t}^{(i)}}-\frac{1}{2})^n-\psi_{A_{\t_0,s_0}^{(i)}}(\phi_{A_{\t_0}^{(i)}}-\frac{1}{2})^n\right)}_{\infty} \\
&\leq& \norm{f}_{H^2}\sum_{i \in \I} \left(\sum_{n \in \N} 2^{2n} \left(\norm{\psi_{A_{\t,s}^{(i)}}-\psi_{A_{\t_0,s_0}^{(i)}}}_{\infty}\bignorm{\phi_{A_{\t}^{(i)}}-\frac{1}{2}}_{\infty}^n+ \right.\right. \\
& & \left.\left. \norm{\psi_{A_{\t_0,s_0}^{(i)}}}_{\infty}\bignorm{(\phi_{A_{\t}^{(i)}}-\frac{1}{2})^n-(\phi_{A_{\t_0}^{(i)}}-\frac{1}{2})^n}_{\infty}\right)^2\right)^{\frac{1}{2}}
\end{eqnarray*}
where the final line follows by the Cauchy-Schwarz and triangle inequalities.

It is sufficient to show that there exists some $R \in (0,1)$ such that for all $\epsilon>0$ we can find some neighbourhood $U_{\epsilon}$ of $(s_0,\t_0)$ such that for all $(s,\t) \in U_{\epsilon}$ and any $i \in \I$,
\begin{eqnarray}
\norm{\psi_{A_{\t,s}^{(i)}}-\psi_{A_{\t_0,s_0}^{(i)}}}_{\infty}\bignorm{\phi_{A_{\t}^{(i)}}-\frac{1}{2}}_{\infty}^n+  \norm{\psi_{A_{\t_0,s_0}^{(i)}}}_{\infty}\bignorm{(\phi_{A_{\t}^{(i)}}-\frac{1}{2})^n-(\phi_{A_{\t_0}^{(i)}}-\frac{1}{2})^n}_{\infty} \leq \epsilon \left(\frac{R}{2}\right)^n. \label{key}
\end{eqnarray}

It follows from our assumptions that there exists $r \in (0,1)$ such that for all $i \in \I$, $\norm{\phi_{A_{\t_0}^{(i)}}-\frac{1}{2}}_{\infty} \leq \frac{r}{2}$. Fix $r<R'<R<1$ and note that for all $\t$ in a neighbourhood of $\t_0$, $\norm{\phi_{A_{\t}^{(i)}}-\frac{1}{2}}_{\infty} \leq \frac{R'}{2}$. Therefore it is easy to see that there exists a neighbourhood $U^1_{\epsilon}$ of $(s_0,\t_0)$ such that for all $(s,\t) \in U^1_{\epsilon}$,
$$\norm{\psi_{A_{\t,s}^{(i)}}-\psi_{A_{\t_0,s_0}^{(i)}}}_{\infty}\bignorm{\phi_{A_{\t}^{(i)}}-\frac{1}{2}}_{\infty}^n \leq \frac{\epsilon}{2}\left(\frac{R}{2}\right)^n.$$
Next we consider the second term in (\ref{key}). Consider 
\begin{eqnarray}
\bignorm{(\phi_{A_{\t}^{(i)}}-\frac{1}{2})^n-(\phi_{A_{\t_0}^{(i)}}-\frac{1}{2})^n}_{\infty}=\left(\frac{R}{2}\right)^n \bignorm{\left(\frac{\phi_{A_{\t}^{(i)}}-\frac{1}{2}}{\frac{R}{2}}\right)^n-\left(\frac{\phi_{A_{\t_0}^{(i)}}-\frac{1}{2}}{\frac{R}{2}}\right)^n}_{\infty}.\label{normalised}
\end{eqnarray}
By putting $R''=\frac{R'}{R} \in (0,1)$ we have
$$\bignorm{\frac{\phi_{A_{\t}^{(i)}}-\frac{1}{2}}{\frac{R}{2}}}_{\infty}, \bignorm{\frac{\phi_{A_{\t_0}^{(i)}}-\frac{1}{2}}{\frac{R}{2}}}_{\infty}<R''.$$
Let $C_1$ be any constant such that $\norm{\psi_{A_{\t_0,s_0}^{(i)}}}_{\infty} \leq C_1$ and $C_2$ be any upper bound on the sequence $N(R'')^N$. Let $U^2_{\epsilon}$ be a neighbourhood of $(s_0,\t_0)$ such that for all $(s,\t) \in U^2_{\epsilon}$,
$$\bignorm{\frac{\phi_{A_{\t}^{(i)}}-\frac{1}{2}}{\frac{R}{2}}-\frac{\phi_{A_{\t_0}^{(i)}}-\frac{1}{2}}{\frac{R}{2}}}_{\infty} \leq \frac{\epsilon}{2C_1C_2}.$$
Then by (\ref{normalised}) and the identity
$$x^n-y^n=(x-y)(x^{n-1}+x^{n-2}y+ \cdots + xy^{n-2}+y^{n-1})$$
we can deduce that
$$
\norm{\psi_{A_{\t_0,s_0}^{(i)}}}_{\infty}\bignorm{(\phi_{A_{\t}^{(i)}}-\frac{1}{2})^n-(\phi_{A_{\t_0}^{(i)}}-\frac{1}{2})^n}_{\infty} 
\leq C_1 \left(\frac{R}{2}\right)^n\frac{\epsilon}{2C_1C_2}n(R'')^n\leq \frac{\epsilon}{2}\left(\frac{R}{2}\right)^n
$$
for all $(s,\t) \in U_{\epsilon}^2$. Taking $U_{\epsilon}=U_{\epsilon}^1 \cap U_{\epsilon}^2$ completes the proof.
\end{proof}

\subsection{Proofs of main results} \label{proof}

By corollary \ref{holo op} and lemma \ref{simple2} we can apply proposition \ref{apt} to deduce that $\lambda_1(s,\t)$ is separately analytic in $s$ and in each $t_i$. By lemma \ref{ev}(b) this immediately implies the analyticity of $P(s,\t)$, but in order to deduce the analyticity of $\dim \A_{\t}$ we need to invoke the implicit function theorem. To this end, we will require the following result.

\begin{lma}
Fix some $\t_0 \in (0,1)^{4|\I|} \cap \Omega$ so that $\A_{\t_0}$ is an irreducible set of positive invertible matrices which are contracting with respect to some norm on $\mathbb{R}^2$, and suppose $s_0= \dim \A_{\t_0} \in (0,1) \cup (1,2) $. Then
$$\frac{\partial }{\partial s} \lambda_1(s,\t) \biggr|_{(s,\t)=(s_{0},\t_0)} \neq 0.$$
 \label{lyap}
\end{lma}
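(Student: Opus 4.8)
The goal is to show that $\partial_s \lambda_1(s,\t)|_{(s_0,\t_0)} \neq 0$, i.e. that the dominant eigenvalue of the transfer operator is strictly monotone in $s$ at the parameter of interest. Since by Lemma \ref{ev}(b) we have $\lambda_1(s,\t_0) = P(s,\t_0)$ for $s \in [0,2]$, and $P(\cdot,\t_0)$ is the sub-additive pressure, it suffices to establish that $s \mapsto P(s,\t_0)$ is \emph{strictly decreasing} in a neighbourhood of $s_0$; combined with the analyticity of $\lambda_1$ in $s$ (already deduced from Corollary \ref{holo op}, Lemma \ref{simple2} and Proposition \ref{apt}), strict monotonicity forces the derivative to be nonzero at $s_0$.

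\textbf{Step 1: reduce to strict monotonicity of the pressure.} First I would record that $\lambda_1(s,\t_0)$ is real-analytic in $s$ on a real neighbourhood of $s_0$ and agrees with $P(s,\t_0)$ there. If $\partial_s\lambda_1(s_0,\t_0) = 0$, I will derive a contradiction by showing $P(\cdot,\t_0)$ cannot be locally constant and in fact is strictly monotone.

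\textbf{Step 2: strict monotonicity via the singular value function.} For fixed $A \in \A_{\t_0}^n$ with singular values $\alpha_1(A) \ge \alpha_2(A) > 0$, the map $s \mapsto \phi^s(A)$ is, by the explicit formula \eqref{simpler}, equal to $\alpha_1(A)^s$ for $s\in[0,1]$ and $\alpha_1(A)^{2-s}\alpha_2(A)^{s-1} \cdot \alpha_2(A)^{?}$... more precisely $\phi^s(A) = \alpha_1(A)\alpha_2(A)^{s-1}$ for $s \in [1,2]$; in both regimes $\log \phi^s(A)$ is affine in $s$ with slope $\log\alpha_1(A) < 0$ (for $s<1$) or $\log\alpha_2(A) < 0$ (for $s>1$), using that every $A \in \A_{\t_0}$ (hence every product) has operator norm, and second singular value, strictly less than one — this is where the contracting-with-respect-to-some-norm hypothesis enters, together with submultiplicativity of $\alpha_1$ and of $\phi^s$. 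Consequently each $\phi^s(A)$ is strictly decreasing in $s$ on the relevant interval, so $s \mapsto \sum_{A\in\A_{\t_0}^n}\phi^s(A)$ is strictly decreasing, and passing to the limit $P(s,\t_0) = \lim_n (\cdots)^{1/n}$ is non-increasing; to upgrade to \emph{strictly} decreasing I would use a quantitative version: since $\rho(A) \le \|A\| \le \kappa < 1$ uniformly over $A \in \A_{\t_0}$ for some $\kappa$ (equivalence of norms), one gets $\phi^{s'}(A) \le \kappa^{(s'-s)n/C}\phi^s(A)$ for $s < s'$ in the same regime and all $A\in\A_{\t_0}^n$, whence $P(s',\t_0) \le \kappa^{(s'-s)/C} P(s,\t_0) < P(s,\t_0)$ since $P(s,\t_0) > 0$. (The constant $C$ accounts for passing between the given norm and the Euclidean/operator norm used to define $\phi^s$.) Here $s_0 \in (0,1)\cup(1,2)$ ensures we stay within a single regime of \eqref{simpler} on a neighbourhood, so no issue arises at the breakpoint $s=1$ or at $s=2$.

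\textbf{Step 3: conclude.} Strict monotonicity of $s\mapsto P(s,\t_0) = \lambda_1(s,\t_0)$ on a neighbourhood of $s_0$, together with its real-analyticity there, implies $\partial_s\lambda_1(s,\t_0)$ is not identically zero near $s_0$ and, being analytic, if it vanished at $s_0$ it would vanish only on a discrete set — yet a nonconstant analytic function with a zero derivative at an interior point can still be monotone, so I instead argue directly: were $\partial_s\lambda_1(s_0,\t_0)=0$, then since $\lambda_1(\cdot,\t_0)$ is analytic and strictly decreasing, its derivative is $\le 0$ throughout a neighbourhood with an isolated zero at $s_0$; this is consistent, so strict monotonicity alone does \emph{not} rule it out. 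The cleaner route, which I would actually take, is to differentiate the eigenvalue directly using the positive eigenfunction. The main obstacle, and the heart of the argument, is therefore the following: use the Krein–Rutman eigenfunction $h_{s,\t_0} \in \interior K$ (positive on $\Gamma$) together with the corresponding positive eigenfunctional $\nu_{s,\t_0}$ (the leading eigenfunctional of the adjoint, which exists and is strictly positive on $K\setminus\{0\}$ by the same Krein–Rutman theory), normalised so $\nu_{s,\t_0}(h_{s,\t_0})=1$. Standard first-order perturbation theory for a simple isolated eigenvalue gives
$$\frac{\partial}{\partial s}\lambda_1(s,\t_0) = \nu_{s,\t_0}\!\left(\frac{\partial}{\partial s}\l_{s,\t_0}\, h_{s,\t_0}\right).$$
Now $\frac{\partial}{\partial s}\l_{s,\t_0}h_{s,\t_0}(x) = \sum_{A\in\A_{\t_0}}\big(\partial_s\psi_{A,s}(x)\big)h_{s,\t_0}(\phi_A(x))$, and from the definition of $\psi_{A,s}$ one has $\partial_s\psi_{A,s} = \psi_{A,s}\cdot\big(\log w_A + \text{(det term)}\big)$, where on $\Gamma \subset (0,1)$ this logarithmic factor equals $\log\phi^s(A)$ up to the bounded error controlled in Lemma \ref{equivalence} — crucially it is \emph{strictly negative} and bounded away from $0$ uniformly on $\Gamma$, because $\phi^s(A) \le \kappa^{1/C} < 1$ for each single $A \in \A_{\t_0}$. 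Hence $\frac{\partial}{\partial s}\l_{s,\t_0}h_{s,\t_0}$ is a strictly negative function on $\Gamma$ (a negative combination of the positive functions $h_{s,\t_0}\circ\phi_A$), so it lies in $-\interior K$; applying the strictly positive functional $\nu_{s_0,\t_0}$ yields a strictly negative number, establishing $\partial_s\lambda_1(s_0,\t_0) < 0 \neq 0$. The one technical point to verify carefully is the existence and strict positivity of the adjoint eigenfunctional $\nu$ and the validity of the perturbation formula on $H^2(D)$ — this follows from Proposition \ref{apt}/\cite{kato} applied to $\l_{s,\t_0}^*$ on the dual, noting that $\l_{s,\t_0}^*$ preserves the dual cone $K^* = \{\mu : \mu(f)\ge 0\ \forall f\in K\}$, which has nonempty interior since $K$ does, so Krein–Rutman applies to the adjoint as well.
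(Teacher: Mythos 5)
Your Step 2 is, in essence, the paper's proof, and it already does the job. The paper's argument is exactly your quantitative estimate: pick a norm $|||\cdot|||$ with $|||A|||\leq\theta<1$ for every generator $A\in\A_{\t_0}$, compare it to $\|\cdot\|$ with a constant $C$, and conclude $\sum_{A\in\A_{\t_0}^n}\phi^{s_0+\varepsilon}(A)\leq C'\theta^{n\varepsilon}\sum_{A\in\A_{\t_0}^n}\phi^{s_0}(A)$, hence $P(s_0+\varepsilon,\t_0)\leq\theta^{\varepsilon}P(s_0,\t_0)$. You then discard this by labelling it merely ``strict monotonicity,'' but it is strictly stronger: a uniform exponential rate. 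Dividing by $\varepsilon$ and letting $\varepsilon\to0^+$ gives
\[
\frac{\partial}{\partial s}P(s,\t_0)\Big|_{s=s_0}\leq\lim_{\varepsilon\to0^+}\frac{\theta^{\varepsilon}-1}{\varepsilon}\,P(s_0,\t_0)=(\log\theta)P(s_0,\t_0)<0,
\]
and by Lemma \ref{ev}(b) this is $\partial_s\lambda_1(s_0,\t_0)$. Your Step 3 objection that strict monotonicity of an analytic function is compatible with a vanishing derivative is correct as a matter of real analysis, but moot: your own Step 2 bound rules this out. In short, you had the paper's proof and failed to recognize it as complete.

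The alternative route in Step 3 (first-order perturbation of the eigenvalue via the adjoint eigenfunctional, recovering the Feng--K\"aenm\"aki-type identity $\partial_sP=-\Lambda(\mu_s)$) is a reasonable idea, but as written it has a genuine gap. You claim that the factor $\log w_A$ appearing in $\partial_s\psi_{A,s}$ is ``strictly negative and bounded away from $0$ uniformly on $\Gamma$, because $\phi^s(A)\leq\kappa^{1/C}<1$ for each single $A\in\A_{\t_0}$.'' This does not follow: Lemma \ref{equivalence} only asserts $c^{-1}\phi^s(A)\leq\psi_{A,s}(x)\leq c\,\phi^s(A)$ with a multiplicative constant $c$ that need not be close to $1$, so for a single generator the additive error $\log c$ can overwhelm $|\log\phi^s(A)|$, and $w_A(x)$ can exceed $1$ somewhere on $\Gamma$. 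Concretely, $A=\left(\begin{smallmatrix}0.5&0.8\\0.3&0.5\end{smallmatrix}\right)$ has entries in $(0,1)$ and spectral radius $<1$, yet $w_A(x)=1.3-0.5x>1$ for all $x<0.6$. Contractivity controls $\log\phi^s(A^{(\i)})\lesssim -c_0|\i|$ for long words, which dominates the $O(1)$ error from Lemma \ref{equivalence} once $|\i|$ is large; so the perturbation-formula route would need to be run on $\mathcal{L}_{s,\t_0}^N$ for $N$ large (differentiating $\lambda_1^N$) before invoking negativity of the log-weight. A further, smaller issue: the claim that the dual cone $K^*$ has nonempty interior ``since $K$ does'' is false in infinite dimensions and is not how one obtains the positive adjoint eigenfunctional. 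These are repairable, but as presented Step 3 does not close, and given that Step 2 already suffices it is the wrong place to invest effort.
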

\begin{proof}
By Lemma \ref{ev}(b) it is sufficient to show that
$$\frac{\partial }{\partial s} P(s,\t) \biggr|_{(s,\t)=(s_{0},\t_0)} <0.$$
In view of the definition of $P(s,\t)$ it is sufficient to show that there exist $C>0$ and $\theta \in (0,1)$ such that for all $n \geq 1$ and all sufficiently small $\varepsilon>0$
\[\sum_{A \in \mathcal{A}_{\t_0}^n} \phi^{s_0+\varepsilon}(A) \leq C\theta^{n\varepsilon} \sum_{A \in \mathcal{A}_{\t_0}^n} \phi^{s_0}(A).\]
Let $|||\cdot|||$ be a norm on $\R^2$ such that $|||A|||<1$ for all $A \in \mathcal{A}_{\t_0}$, and let $C>0$ such that $C^{-1}\|B\| \leq |||B|||\leq C\|B\|$ for all $2\times 2$ real matrices $B$. Denote $\theta=\max_{A \in \mathcal{A}_{\t_0}} |||A|||<1$. If $s_0+\varepsilon \leq 1$ we note that
\begin{align*}\sum_{A \in \mathcal{A}_{\t_0}^n} \phi^{s_0+\varepsilon} &= \sum_{A \in \mathcal{A}_{\t_0}^n} \|A\|^{s_0+\varepsilon}  \leq C^{s_0+\varepsilon} \sum_{A \in \mathcal{A}_{\t_0}^n} |||A|||^{s_0+\varepsilon}\\
&\leq C^{s_0+\varepsilon} \left(\max_{A \in \mathcal{A}_{\t_0}^n} |||A|||\right)^{\varepsilon} \sum_{A \in \mathcal{A}_{\t_0}^n} |||A|||^{s_0} \\
& \leq C^{s_0+\varepsilon} \left(\max_{A \in \mathcal{A}_{\t_0}} |||A|||\right)^{n\varepsilon} \sum_{A \in \mathcal{A}_{\t_0}^n} |||A|||^{s_0} \\
& \leq C^{2s_0+\varepsilon} \theta^{n\varepsilon} \sum_{A \in \mathcal{A}_{\t_0}^n} \|A\|^{s_0}  = C^{2s_0+\varepsilon} \theta^{n\varepsilon} \sum_{A \in \mathcal{A}_{\t_0}^n} \phi^{s_0}(A),\end{align*}
say, and the result follows. The case where $s_0>1$ is similar.
\end{proof}

\noindent \emph{Proof of Theorem \ref{main}.} Fix $\t_0 \in (0,1)^{4|\I|} $ such that $\A_{\t_0}$ is an irreducible set of invertible matrices and assume $s_0= \dim \A_{\t_0} \in (0,1) \cup (1,2)$. In particular $\t_0 \in \Omega$. Let $U_1, \ldots, U_{4|\I|}, V$ be the neighbourhoods from corollary \ref{holo op}, and by lemma \ref{simple2} we can assume that $\lambda_1(s,\t)$ is simple for all $(s,\t) \in U_1 \times \cdots \times U_{4|\I|} \times V$. By corollary \ref{holo op}, $\{\l_{s,\t}\}$ is analytic in $s$ in a neighbourhood of $s_0$ while $t_i\in U_i$ are fixed for all $i$. Since $\lambda_1(s_0,\t)$ is simple for all $\t\in U_1 \times \cdots \times U_{4|\I|}$, we can invoke the analytic perturbation theorem (proposition \ref{apt}) to deduce that $\lambda_1(s,\t)$ is analytic in $s$ in some neighbourhood of $s_0$ while $\t \in U_1 \times \cdots U_{4|\I|}$ is fixed. By using the analogous argument for each $t_i$ and applying Hartogs's theorem, we obtain that $\lambda_1(s,\t)$ is jointly analytic in $(s,\t)$ in a neighbourhood $\Upsilon'$ of $(s_0, \t_0)$. Therefore, $(s,\t) \to  P(s,\t)$ is real analytic in $\Upsilon' \cap \R^{4|\I|+1}$.

Define the analytic map $F(s,\t):=P(s,\t)-1$ which satisfies $F(s_0, \t_0)=0$ by assumption. Observe that $\frac{\partial F}{\partial s}(s_0, \t_0) \neq 0$ by Lemma \ref{lyap}. Therefore by the implicit function theorem there exists an analytic function $\delta: B(\t_0, \epsilon) \to B(s_0, \epsilon')$ such that $F(\t, \delta(\t))=0$ for all $\t \in B(\t_0, \epsilon)$. In particular by the uniqueness of the root of the pressure, $\delta(\t)= \dim \A_{\t}$ which completes the proof.   \qed

\vspace{2mm}

\noindent \textit{Proof of Corollary \ref{bhr cor}.}  
Suppose $\A_{\t_0}$ is a set of irreducible matrices that strictly preserve a common cone $\mathcal{C}$. Then it is easy to see that for $\t$ in an real open neighbourhood of $\t_0$, $\A_{\t}$ is also a set of irreducible matrices which also strictly preserve $\mathcal{C}$. Note that $\A_{\t}$ is in fact necessarily strongly irreducible since it preserves $\mathcal{C}$. It is also easy to see that the set of normalised matrices $\tilde{\A_{\t}}$ generate a non-compact subgroup of $\mathcal{GL}_2(\R)$ , since if $A \in \A_{\t}$ then $\lambda_1(A) > \sqrt{|\det(A)|}$ by the Perron-Frobenius theorem for positive matrices and therefore $\norm{\frac{1}{\sqrt{|\det(A^n)|}}A^n} \to \infty$ as $n \to \infty$. 

Also, since $\Phi_{\t_0}$ satisfies the strong separation condition, $\Phi_{\t}$ satisfies the strong separation condition (which implies the strong open set condition) for $\t$ in a real open neighbourhood of $\t_0$. 

Combining all of this together, we see that $\Phi_{\t}$ satisfies the hypothesis of theorem \ref{bhr} for any $\t$ in an open neighbourhood of $\t_0$, and therefore $\dim \A_{\t}=\hd F_{\t}=\bd F_{\t}$ for $\t$ in an open neighbourhood of $\t_0$. Applying theorem \ref{main} completes the proof. \qed

\vspace{2mm}

\noindent \textit{Proof of Corollary \ref{cor2}.} As in the proof of corollary \ref{bhr cor} we can deduce that $\tilde{\A_{\t}}$ generate a non-compact strongly irreducible subgroup of $\mathcal{GL}_2(\R)$ for all $\t$ in some open neighbourhood of $\t_0$. By assumption, $\Phi_{\t}$ satisfies the strong open set condition for $\t$ close to $\t_0$ and therefore for all $\t$ in an open neighbbourhood of $\t_0$, $\Phi_{\t}$ satisfies the hypothesis of theorem \ref{bhr}. Thus $\hd F_{\t}= \bd F_{\t}=\dim \A_{\t}$ and by applying theorem \ref{main} the proof is complete. \qed

\end{document}